\theoremstyle{plain}
\newtheorem{lem}{Lemma}[section]
\newtheorem{cor}[lem]{Corollary}
\newtheorem{thm}[lem]{Theorem}
\theoremstyle{definition}
\newtheorem{defn}[lem]{Definition}
\newtheorem{ex}[lem]{Example}
\newtheorem*{convention*}{Convention}
\newcommand{\ds}{\displaystyle}
\newcommand{\bbn}{\mathbb{N}}
\newcommand{\bbz}{\mathbb{Z}}
\newcommand{\bbq}{\mathbb{Q}}
\newcommand{\bbr}{\mathbb{R}}
\newcommand{\onto}{\twoheadrightarrow}
\newcommand{\p}{\mathfrak{p}}
\newcommand{\m}{\mathfrak{m}}
\numberwithin{equation}{section}
\begin{document}
\author{Jim Coykendall}
\address{Department of Mathematical Sciences\\
	Clemson University\\
	Clemson, SC, 29634}
\email[J.~Coykendall]{jcoyken@clemson.edu}
\keywords{Factorization, cohomology, cochain complexes}
\subjclass[2014]{Primary: 13F15, 13D02, 13D99}
\author{Brandon Goodell}
\address{Department of Mathematical Sciences\\
	Clemson University\\
	Clemson, SC, 29634}
\email[B.~Goodell]{bggoode@clemson.edu}

\title{A Homological Approach to Factorization}
\begin{abstract}
In \cite{mott1974convex}, Mott noted a one-to-one correspondence between saturated and multiplicatively closed subsets of a domain $D$ and directed convex subgroups of the group of divisibility $G(D)$. We construct a functor between inclusions into saturated localizations of $D$ and projections onto partially ordered quotient groups of $G(D)$. We use this functor to construct cochain complexes of $o$-homomorphisms of po-groups. These complexes naturally lead to structure theorems and cohomological results that provide insight into the factorization behavior of $D$.
\end{abstract}

\maketitle

\section{Introduction}\label{introduction}

Classical factorization in integral domains is historically motivated by decomposing objects as products of irreducible elements, which leads suggestively to the study of ordered groups.  Recently, especially since \cite{AAZ}, the main body of literature on factorization has turned its attention toward atomic domains and their associated groups of divisibility. This is natural, of course, since atomic domains are precisely the integral domains in which every non-zero non-unit factors into irreducible elements.  Considering factorization only from the point of view of the atomic elements greatly limits the scope of what can be accomplished. The constraint that integral domains be atomic, generated by their irreducible elements, seems to be just as extreme a constraint as the constraint that integral domains be \textit{antimatter}. Antimatter domains, first described in \cite{coykendall1999integral}, are domains with no irreducible elements whatsoever.  If one were to study a general integral domain, one might reasonably expect that some domain elements decompose into a product of irreducible elements, and some do not. 

In fact, many previous studies have avoided making assumptions of atomicity. In \cite{zaks1976half}, Zaks provides the first discussion on half factorial domains and initially avoids the assumption of atomicity. In \cite{krull1932allgemeine}, Krull also avoids the assumption of atomicity. In \cite{mott1974convex}, Mott considers localizations of arbitrary (not necessarily atomic) integral domains, and in \cite{anderson1990weakly}, Anderson and Zafrullah characterize weakly factorial domains (which are not necessarily atomic).

Examples of rings that are neither atomic nor antimatter are abundant. Recall from \cite{kaplansky1970commutative}, a \textit{valuation domain} is an integral domain $V$ with a field of fractions, $K$, such that if $x \in K \setminus 0$ then $x \in V$ or $x^{-1} \in V$. From \cite{gilmer1972multiplicative}, a discrete valuation domain is a valuation domain in which every primary ideal is a power of its radical. Although a Noetherian discrete valuation domain is a PID, arbitrary valuation domains are not Noetherian in general. A discrete valuation domain $V$ with Krull dimension $2$ and prime spectrum $0 \subseteq \p \subseteq \m$ is not atomic but contains an irreducible.

$V$ admits a unique irreducible (up to associates), say $x$, so the domain is not antimatter. Also, $x$ principally generates the unique maximal ideal, so this domain has a sort of weak universal factorization property: every non-zero non-unit element is divisible by a power of $x$. On the other hand, this domain is not atomic. In fact, no element of the prime ideal with height one, $\p$ factors into a finite product of irreducible elements. Yet $V_\p$ is a PID. In particular, some non-zero non-units in $V$ are products of irreducibles and are not; we refine this example in Section \ref{POAGs}.

We abandon the assumption of atomicity and consider arbitrary integral domains, including antimatter domains. Previous studies in factorization, such as \cite{krull1932allgemeine}, \cite{sheldon1973two}, and \cite{mott1974convex}, have, speaking broadly, focused on sequences of morphisms between algebraic objects and their partial orders. In \cite{krull1932allgemeine}, Krull demonstrated a one-to-one correspondence between the prime ideals of arbitrary valuation rings and the convex subgroups of the associated groups of divisibility.  In \cite{sheldon1973two}, Sheldon extended Krull's work by demonstrating a one-to-one correspondence between the prime ideals of a B\'{e}zout domain, $D$, and the prime filters of the positive cone of the group of divisibility, $G(D)$.  In \cite{mott1974convex}, Mott was the first to notice that prime filters of a positive cone correspond to subgroups that are both convex and directed. Mott used this observation to generalize the correspondences developed by Krull and Sheldon: there exists a one-to-one correspondence between convex and directed subgroups within $G(D)$ and the saturated multiplicatively closed sets within $D$. Mott's generalization seems quite natural, given the connection between convex subgroups and order preserving epimorphisms. In \cite{mott1976exact}, Mott extended his work into exact sequences of value groups.

As discussed in \cite{fuchs2011partially} and \cite{mockor1983groups}, convex subgroups are the only subgroups for which the resulting quotient group is also a po-group under the inherited quotient order. We use Mott's correspondence to develop some homological tools to investigate chains of convex subgroups in arbitrary groups of divisibility, allowing us to qualify the depth of non-atomicity in a domain. We use Mott's correspondence to construct a functor between domain localizations and po-group projections of groups of divisibility. In this way, factorization questions about localizations of $D$ reduce to questions about the structure of the quotient po-groups of $G(D)$. Chains of injections between localizations of $D$ at saturated sets correspond to projections between quotient po-groups of $G(D)$. These sequences provide structure theorems, induce a menagerie of cochain complexes of order-preserving homomorphisms, and yield homological information.

In Section \ref{defsEtc}, we establish terminology, definitions, and properties of partially ordered abelian groups. In Section \ref{POAGs}, we work with Mott's correspondence to construct a functor between chains of o-epimorphisms onto convex directed subgroups of po-groups and chains of localizations of domains at saturated multiplicatively closed subsets. We also construct the fundamental object of this study, which is a sequence of canonical o-epimorphisms which we refer to as the \textit{quasi-atomic quotient sequence}. In Section \ref{homology}, we investigate the homological properties of the quasi-atomic quotient sequence. In Section \ref{structure}, we establish some structure theorems for po-groups under certain assumptions, including a splitting theorem and conditions for when a po-group splits into either the lexicographic or product order.

\section{Background and Notation}\label{defsEtc}

Let $D$ be an arbitrary integral domain. We refer to an irreducible element of $D$ as an \textit{atom} and finite products of irreducible elements as \textit{atomic}. If every non-zero non-unit element of $D$ is atomic, we refer to $D$ as an \textit{atomic domain}. Following terminology from \cite{coykendall1999integral}, some integral domains may lack irreducible elements. Whenever $D$ contains no irreducible elements, we say $D$ is an \textit{antimatter domain}.  Denote the unit group of $D$ as $U(D)$. 

The study of factorization leads to the study of groups of divisibility, which are abelian and admit a partial order compatible with the group operation. We assume all groups are abelian. Denote the partial order on a set, $X$, as $\leq_X$, unless there is no danger of confusion about which set is being ordered, in which case we simply use $\leq$. A group, $(G,+)$, together with a partial order, $\leq_G$, such that, for any $x, y, z \in G$, if $x \leq_G y$ then $x + z \leq_G y + z$, is known as a \textit{po-group}. The group of divisibility, defined below, is a po-group associated to a domain and has a partial order induced by the divisibility relation in $D$. We denote the group of divisibility of an integral domain, $D$, as $G(D)$.

We use terminology from \cite{fuchs2011partially}: we say an element $g \in G$ is \textit{non-negative} if $e_G \leq_G g$, we say $g$ is \textit{positive} if $e_G <_G g$, and we refer to the subset of all non-negative elements of $G$ as the \textit{positive cone} of $G$, denoted as $G^+$. If $G$ is generated by $G^+$, we say $G$ is \textit{directed}. We use similar terminology for subgroups, i.e.\ we may say a subgroup is directed if it is generated by the positive cone of that subgroup.

Following  Fuchs in \cite{fuchs2011partially} and  Mo\v{c}ko\v{r} in \cite{mockor1983groups}, we refer to any subgroup that is both directed and convex as an \textit{o-ideal}.  If an element $g \in G$ is minimal and positive, we say $g$ is an \textit{atom}. The notion of partial ordering allows for the notion of convexity. We say a subgroup $H \subseteq G$ is \textit{convex} whenever $h_1 \leq g \leq h_2$ and $h_1, h_2 \in H$ implies $g \in H$. If $H$ is directed it is sufficient to check whether $e_G \leq g \leq h$ and $h \in H$ implies $g \in H$. Later, in Section \ref{POAGs}, we demonstrate the connection between convexity in subgroups and the notion of saturation in multiplicatively closed subsets in the domain.

For any po-groups, $G_1, G_2$, and group homomorphism $\phi: G_1 \to G_2$, we say that $\phi$ is \textit{order-preserving} or \textit{monotonic} if $x \leq y$ in $G_1$ implies $\phi(x) \leq \phi(y)$ in $G_2$. We refer to order-preserving (monotonic) homomorphisms as \textit{o-homomorphisms}, following Fuchs in \cite{fuchs2011partially} and  Mo\v{c}ko\v{r} in \cite{mockor1983groups}. Furthermore, if $\phi: G_1 \to G_2$ is a surjective o-homomorphism and $\phi(G_1^+) = G_2^+$, then we say that $\phi$ is an \textit{o-epimorphism}.

This is a stronger condition than simply being a surjective o-homomorphism. 
For example, let $G = (\bbz,+)$ under the usual order, and let $H = G \oplus G$ under the product order. 
Then the map $\phi: H \to G$ defined by  $\phi(a,b) \mapsto 5a + 7b$ is surjective since $\phi(3,-2) = 1$. 
But $\phi$ is also order-preserving: if $(a,b) \geq (0,0)$ then $a \geq 0$ and $b \geq 0$ since $H$ is in the product order, and so $5a + 7b \geq 0$ in $G$. 
However, $1$ cannot be written as the image of a positive element, and so $\phi$ is not an o-epimorphism.  For any o-ideal $H_2 \subseteq G_2$ and any o-epimorphism $\phi:G_1 \to G_2$, the inverse image $\phi^{-1}(H_2) = \left\{g \in G_1 \mid \phi(g) \in H_2\right\}$ is an o-ideal of $G_1$. This is not true of o-homomorphisms in general.

If $\phi$ is a group isomorphism and an o-epimorphism, then both $\phi$ and $\phi^{-1}$ are  \textit{o-isomorphisms}.  We are generally only concerned with partially ordered groups up to o-isomorphism; if two po-groups are o-isomorphic, we will consider them to be the ``same.'' Indeed if rings $R_1$ and $R_2$ have o-isomorphic groups of divisibility $G_1 \simeq G_2$, then any factorization structure detectable by the group of divisibility in $R_1$ will be present in  $R_2$ up to units via the o-isomorphism on $G_1, G_2$.

The group of divisibility is defined in \cite{mott1974convex}, \cite{mockor1983groups}, and \cite{jensen1963characterizations}, as the po-group of non-zero principal fractional ideals. This group is partially ordered by reverse set containment, and the group is, of course, abelian. For an integral domain $D$ with quotient field $K$ and unit group $U(D)$, the group of divisibility is defined in \cite{gilmer1972multiplicative} and \cite{anderson1990weakly} as $K^{\times} = K\setminus 0$. The ordering on $K^{\times}/U(D)$ is the \textit{natural divisibility ordering} defined by $aU(D) \leq bU(D)$ if and only if $\frac{b}{a} \in D$. There exists an o-isomorphism between the group of divisibility $G(D)$ so defined and the group $K^{\times}/U(D)$ defined by mapping the principal fractional ideal generated by $\frac{a}{b}$ to the group element $\frac{a}{b}U(D)$.   We use the latter definition from \cite{gilmer1972multiplicative} and \cite{anderson1990weakly} by defining $G(D) = K^{\times}/U(D)$.

There exists a natural semi-valuation from $\nu : K \setminus 0 \longrightarrow G(D)$ given by the map $\nu: x \mapsto xU(D)$. This natural semi-valuation connects ring-theoretic factorization to po-group-theoretic information. For example, $D$ is an atomic domain if and only if $G(D)$ is generated by its atoms, and $D$ is an antimatter domain if and only if $G(D)$ contains no minimal positive elements. By definition, $G(D)^+ = \nu(D)$.  Also, any $x \in D$ is an atom (as a ring element) if and only if $\nu(x) \in G(D)$ is an atom (as a po-group element).

A partial order on direct sums of po-groups can be naturally induced from the underlying partial orders; for two examples, consider the lexicographic order and the product order. For po-groups $G_1, G_2$, the lexicographic order, $\leq_\ell$, on $G_1 \oplus G_2$ is defined by saying $(g_1, g_2) \leq_\ell (h_1, h_2)$ if and only if $g_1 < h_1$ in $G_1$ or $g_1 = h_1$ in $G_1$ and $g_2 \leq h_2$ in $G_2$. On the other hand, the product order, $\leq_p$ on $G_1 \oplus G_2$ is defined by saying $(g_1, g_2) \leq_p (h_1, h_2)$ if and only if $g_1 \leq h_1$ in $G_1$ and $g_2 \leq h_2$ in $G_2$. If $G_1, G_2$ are totally ordered then the lexicographic order on $G_1 \oplus G_2$ is a total ordering. These are not the only two product partial orders available. In this way, the product order $\leq_p$ is finer than the lexicographic order, $\leq_\ell$, and the equality relation is the finest of all relations. These definitions extend inductively to any finite direct sum $\oplus_{i=1}^{N} G_i$. 

We can further extend the definition of the product order to any set of po-groups $\left\{G_i\right\}_{i \in \Lambda}$ whose index set $\Lambda$. The product order on $\oplus_{i \in \Lambda} G_i$ is defined by setting $(g_i) \leq_p (h_i)$ if and only if $g_i \leq h_i$ in $G_i$ for every $i \in \Lambda$. We can similarly extend the definition of the lexicographic order to any set of po-groups whose index set, $\Lambda$, is partially ordered. The lexicographic order on $\oplus_{i \in \Lambda} G_i$ where $\Lambda$ is partially ordered by $\leq_{\Lambda}$ is defined by setting $(g_i) \leq_\ell (h_i)$ if and only if there exists some $\lambda_0 \in \Lambda$ such that $g_{\lambda_{0}} < h_{\lambda_{0}}$ in $G_{\lambda_0}$ and if $i < \lambda_0$ then $g_i = h_i$. The lexicographic order $\leq_\ell$ on $\oplus_{i\in \Lambda} G_i$ so defined is a partial order. If each $G_i$ is totally ordered and $\Lambda$ is a well-ordered set then the lexicographic order on $\oplus_{i \in \Lambda} G_i$ is a total ordering.

Following \cite{samuel1948ultrafilters}, a set $S$ with two relations, say $\sim$ and $\approx$ such that  $s \sim t$ implies $s \approx t$, we say $\sim$ is \textit{finer} than $\approx$ (or, alternatively, $\approx$ is \textit{coarser} than $\sim$). Placing the coarse/fine relation on partial orders becomes more interesting when we turn our attention to direct sums of po-groups. In fact, the product order is finer than than the lexicographic order, and the discrete partial order (the equality relation) is finer than the product order.

\section{Partially Ordered Abelian Groups}\label{POAGs}

In this section, we construct the primary object of study, the quasi-atomic quotient sequence. To do so, we study convex subgroups of po-groups, quotient po-group projections, and isomorphism theorems in the po-group setting (even isomorphism theorems do not necessarily hold for po-groups). We move on to Mott's correspondence between convex directed subgroups of the group of divisibility and saturated and multiplicatively closed sets of the underlying integral domains. We develop a functorial relationship between localizations of rings and projections of groups. From this, we construct chains of projections of groups related to ring localizations, which we call the quasi-atomic quotient sequence. We observe a few surprising connections between an integral domain and the associated quasi-atomic quotient sequence for the group of divisibility. 

Let $G$ be a multiplicative po-group with order $\leq_G$ (although not necessarily a group of divisibility). Let $H \subseteq G$ be a subgroup.  The \textit{induced quotient order} $\leq_{G/H}$ on $G/H$, defined naturally by
\begin{align*}
aH &\leq_{G/H} bH\text{ if and only if }\exists \alpha \in aH, \beta \in bH\text{ such that } \alpha \leq_{G} \beta
\end{align*}
is a \textit{quasi-order} (some authors prefer \textit{pre-order}), which are relations that satisfy all the axioms of a partial ordering except for antisymmetry.  Equivalently, we may say $aH \leq bH$ if and only if there exists some $h$ such that $a \leq bh$. For certain choices of $H$, however, the quotient order is a partial order. In fact, we have Theorem \ref{thm-tfae}, in part due to Fuchs in \cite{fuchs2011partially}, and certainly many others, which describes when the quotient order is a partial order.  

In Theorem \ref{thm-tfae} below, we present a usual result regarding convex subgroups and an extension of that result relating to saturation in a ring.  Recall that, for a domain $D$ with quotient field $K$, for any $x \in K$,  $xU(D)\in G(D)$ is positive if and only if $x \in D$. From this perspective, property (iv) relates to saturation.

\begin{thm}\label{thm-tfae}
Let $G$ be a multiplicative po-group with subgroup $H \subseteq G$ with partial order $\leq_G$.  The following are equivalent:
\begin{enumerate}[(i)]
\item $H$ is a convex subgroup of $G$,
\item $G/H$ is a po-group under the quotient order $\leq_{G/H}$,
\item $H$ is the kernel of some o-epimorphism, and
\item For any $g_1, g_2 \in G^+$, if $g_1 g_2 \in H$, then $g_1 \in H$ and $g_2 \in H$.
\end{enumerate}
\end{thm}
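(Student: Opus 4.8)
The plan is to prove the four conditions equivalent via the cycle (i) $\Rightarrow$ (ii) $\Rightarrow$ (iii) $\Rightarrow$ (iv) $\Rightarrow$ (i), since each implication is fairly short once the right element is isolated, and the cyclic arrangement avoids redundant work. I would begin by recording, once and for all, that the induced relation $\leq'$ on $G/H$ is reflexive and transitive (this is asserted in the text), so in each step the only thing at issue for ``$G/H$ is a po-group'' is antisymmetry together with order-compatibility with the group operation; compatibility is automatic because if $\alpha \leq \beta$ then $c\alpha \leq c\beta$ in $G$, which descends to cosets.

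For (i) $\Rightarrow$ (ii): assume $H$ convex and suppose $aH \leq' bH$ and $bH \leq' aH$. Unwinding the definition, there are $\alpha_1 \in aH$, $\beta_1 \in bH$ with $\alpha_1 \leq \beta_1$, and $\beta_2 \in bH$, $\alpha_2 \in aH$ with $\beta_2 \leq \alpha_2$. The standard trick is to replace these by comparable representatives of the \emph{same} cosets: since $\alpha_1, \alpha_2 \in aH$ and $\beta_1,\beta_2 \in bH$, write $\beta_1 = \beta_2 h$ for some $h \in H$, translate the inequality $\beta_2 \le \alpha_2$ by $h$ to get $\beta_1 \le \alpha_2 h$, and chain with $\alpha_1 \le \beta_1$ to obtain $\alpha_1 \le \alpha_2 h$ with both ends in $aH$; then $\alpha_2 h \alpha_1^{-1} \in H$ is positive, so convexity applied to $e \le \alpha_1^{-1}\alpha_1 \le \dots$ forces the relevant group element into $H$, yielding $aH = bH$. (I will be careful with multiplicative notation here; the honest computation is that $\alpha_1^{-1}\beta_1 \in H$ and lies between $e$ and $\alpha_1^{-1}\alpha_2 h \in H$, hence by convexity is trapped, and combined with the reverse inclusion we conclude $ab^{-1} \in H$.) For (ii) $\Rightarrow$ (iii): take the canonical projection $\pi \colon G \to G/H$. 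It is a surjective group homomorphism with kernel $H$; it is order-preserving by construction of $\leq'$; and $\pi(G^+) = (G/H)^+$ because any positive coset $aH \geq' eH$ has, by definition, a representative $\alpha$ with $e \leq \alpha'$ for some $\alpha' \in eH = H$, and translating gives a representative of $aH$ that is $\geq e$ in $G$ — wait, more directly: $eH \leq' aH$ means $\exists\, \eta \in H,\ \alpha \in aH$ with $\eta \le \alpha$, so $e \le \eta^{-1}\alpha \in aH$, exhibiting a positive representative. Hence $\pi$ is an $o$-epimorphism with kernel $H$.

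For (iii) $\Rightarrow$ (iv): let $f$ be an $o$-epimorphism with $\ker f = H$, and suppose $g_1, g_2 \in G^+$ with $g_1 g_2 \in H$. Then $f(g_1) f(g_2) = f(g_1 g_2) = e'$ in $G'$, and $f(g_1), f(g_2) \in G'^+$ since $f$ preserves positivity; in a po-group, if a product of two positive elements is the identity, each factor is the identity (because $e' \le f(g_1) \le f(g_1)f(g_2) = e'$ forces $f(g_1) = e'$ by antisymmetry in $G'$, and similarly for $f(g_2)$), so $g_1, g_2 \in \ker f = H$. For (iv) $\Rightarrow$ (i): assume the product condition and suppose $h_1 \le g \le h_2$ with $h_1, h_2 \in H$. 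Then $e \le h_1^{-1} g$ and $e \le g^{-1} h_2$, and their product $(h_1^{-1}g)(g^{-1}h_2) = h_1^{-1} h_2 \in H$; applying (iv) to these two positive elements gives $h_1^{-1} g \in H$, hence $g \in H$.

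I do not expect a genuine obstacle here — this is a known folklore theorem — but the step most prone to bookkeeping errors is (i) $\Rightarrow$ (ii), because antisymmetry of the induced order requires juggling two different comparable-representative pairs for the \emph{same} pair of cosets and then invoking convexity on an element one has to construct by translation; in multiplicative (non-abelian-looking, though here abelian) notation it is easy to mishandle an inverse. I would therefore write that step with all representatives named explicitly and the translation by an element of $H$ spelled out, and lean on the abelian hypothesis (all groups of divisibility are abelian, and Fuchs's setting allows it) to keep the algebra clean. The remaining implications are essentially immediate from the definitions of $o$-epimorphism, kernel, and positive cone, plus antisymmetry in the target po-group.
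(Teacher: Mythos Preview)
Your proposal is correct and follows essentially the same route as the paper: the implications (i)$\Rightarrow$(ii), (ii)$\Rightarrow$(iii), and (iv)$\Rightarrow$(i) are argued exactly as in the text, and your (iii)$\Rightarrow$(iv) is a trivial variant of the paper's (iii)$\Rightarrow$(i). The only organizational difference is that the paper proves (i)$\Leftrightarrow$(iv) separately and then closes the loop (i)$\Rightarrow$(ii)$\Rightarrow$(iii)$\Rightarrow$(i), whereas you run a single four-step cycle---a slightly more economical packaging of the same ideas.
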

\begin{proof}

Denote the identity of $G$ as $e_G$. We only prove the equivalency of (i) and (iv); Fuchs proved the equivalency of (i), (ii), and (iii) in \cite{fuchs2011partially}. If $g_1, g_2 \in G^+$ then certainly $g_1 g_2 \in G^+$. If we further have that $H$ is convex and that $g_1 g_2 \in H$ then $e_{G} \leq g_1 \leq g_1 g_2$ and $e_{G} \leq g_2 \leq g_1 g_2$. Convexity provides (iv). On the other hand, assume $H$ satisfies (iv) and $h_1 \leq g \leq h_2$ for some $h_1, h_2 \in H$.  Then $h_1^{-1}g$ and $g^{-1}h_2$ are both positive elements whose product is in $H$. Since $H$ satisfies (iv), we obtain that both $h_1^{-1}g$ and $g^{-1}h_2$ are in $H$, yielding that $g \in H$.
\end{proof}

Unfortunately, Theorem \ref{thm-tfae} is as restrictive as it is descriptive: convex subgroups are the only subgroups for whom the natural quotient group is partially ordered under the inherited quotient order. Of course, not all subgroups of an arbitrary po-group $G$ are convex.  For example, if $H$ is any infinite proper subgroup of the additive $\bbz$ then $\bbz/H$ is finite, so only the trivial partial order on $\bbz/H$ is compatible with addition. Thus, $\bbz/H$ is not a po-group and $H$ is not a convex subgroup of $\bbz$.

We also have Theorem \ref{thm-first-o-iso}, also due to Fuchs in \cite{fuchs2011partially} (and others). This theorem is the analogue to the First Isomorphism Theorem in the po-group setting. The proof of Theorem \ref{thm-first-o-iso} may be found in \cite{fuchs2011partially}, so we omit it:

\begin{thm}[First o-isomorphism theorem]
If there exists an o-epimorphism $\phi:G_1 \to G_2$ then there exists an o-isomorphism $G_2 \simeq G_1/\ker \phi$.
\label{thm-first-o-iso}
\end{thm}

Theorem \ref{thm-first-o-iso} may be used to establish the following corollary regarding quotients and direct sums:

\begin{cor}
Let $\Lambda$ be an index set, let $\{G_i\}_{i\in\Lambda}$ be a set of po-groups with corresponding o-ideals $\{H_i\}_{i\in\Lambda}$. Define the po-groups $G_p:=\oplus_{i\in\Lambda} G_i$,  $H_p:=\oplus_{i \in \Lambda} H_i$, and $L_p := \oplus_{i\in \Lambda} G_i/H_i$, all under the product order. There exists an o-isomorphism $G_p/H_p \simeq L_p$. 
\label{direct-sums-respect-lex-prod}
\end{cor}
\begin{proof}
The canonical map $\pi: G_p \to L_p$ defined by mapping $(g_i)_i \mapsto (g_i + H_i)_i$ is a surjective o-homomorphism. The kernel of this map is $H_p$, and a direct sum of convex subgroups under the product order is a convex subgroup, so $H_p$ is convex. Hence, $H_p$ is the kernel of the o-epimorphism $G_p \onto G_p/H_p$. Moreover, by Theorem \ref{thm-first-o-iso}, $G_p/H_p \simeq \text{Im}(\pi) \subseteq L_p$. Note that if $x \in L_p$, then $x = (g_i + H_i)$ for some $\left\{g_i\right\}_{i \in \Lambda}$. Of course, $x = \pi((g_i))$, and so we see $L_p \subseteq \text{Im}(\pi)$.
\end{proof}

The lexicographic analogue of this Corollary is false in general. To see this, consider po-groups $G_1, G_2$ with subgroups $H_1, H_2$. Let $(h_1, g_1)$, $(h_2, g_2) \in G_1 \oplus G_2$ ordered lexicographically such that $h_1 < h_2$ in $H_1 \subseteq G_1$ but $g_2 < g_1$ in $G_2 \setminus H_2$. The canonical map is then $\pi: G_1 \oplus G_2 \onto \frac{G_1}{H_1} \oplus \frac{G_2}{H_2}$. Furthermore, we have that $(h_1, g_1) \leq (h_2, g_2)$ in $G_1 \oplus G_2$ ordered lexicographically, but $\pi(h_1, g_1) = (H_1, g_1 + H_2)$ and $\pi(h_2, g_2) = (H_1, g_2 + H_2)$. Since $g_2 < g_1$, we have that $g_2 + H_2 < g_1 + H_2$, so $\pi(h_2, g_2) < \pi(h_1, g_1)$. This violates order preservation.

Recall we defined an o-ideal of a po-group to be any subgroup that is simultaneously convex and directed. We produce an example demonstrating that directed subgroups and convex subgroups are distinct in general.

\begin{ex}
Not all directed subgroups are convex. For an example of such a subgroup, consider the additive subgroup $2\bbz \subseteq \bbz$ under the usual total ordering. Also, not all convex subgroups are directed. For an example of such a subgroup, consider again the integers and their group of divisibility, $G(\bbz) = \bbq^{\times}/U(\bbz)$ under the usual partial order induced by ordinary divisibility. We claim the subgroup $H:=\langle 2/3 \rangle$ is convex. Indeed, assume $(\frac{2}{3})^{n} \leq x \leq (\frac{2}{3})^{m}$ for some $n, m \in \bbz$. In particular, $(\frac{2}{3})^{n}\leq(\frac{2}{3})^{m}$. The partial order insists, then, that $\frac{(2/3)^m}{(2/3)^n} \in \bbz$, thus $n=m$. Antisymmetry provides $x = (\frac{2}{3})^n$ and so $H$ is convex but not directed.
\label{convexNotDirectedAndViceVersa}
$\triangle$\end{ex}

\begin{thm}[\protect{Mott's Correspondence~\cite[ThmX]{mott1974convex}}]\label{thm-mott}
Let $D$ be an integral domain with quotient field $K$, unit group $U(D)$, and group of divisibility $G(D) = K^{\times}/U(D)$. Let $\nu:K^{\times} \to G(D)$ be the natural map defined by $x \mapsto xU(D)$. Let $\mathcal{S}$ be the set of all saturated multiplicatively closed subsets of $D$ and let $\mathcal{O}$ be the set of all o-ideals of $G(D)$. Then the map from $\mathcal{S}$ to $\mathcal{O}$ given by $S \mapsto \langle \nu(S)\rangle$ is a one-to-one correspondence. Further, $G(D)/\langle \nu(S)\rangle$ is precisely the group of divisibility of $D_S$.
\end{thm}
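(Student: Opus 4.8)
The plan is to establish the correspondence in four movements: (a) show the assignment $S \mapsto \langle \nu(S) \rangle$ lands in $\mathcal{O}$; (b) construct a candidate inverse $H \mapsto \nu^{-1}(H^+ \cup \{?\})$, more precisely $H \mapsto \{d \in D \setminus \{0\} : \nu(d) \in H\} \cup U(D)$ — call it $T(H)$ — and show it lands in $\mathcal{S}$; (c) check the two composites are identities; (d) identify $G(D)/\langle \nu(S)\rangle$ with $G(D_S)$. Throughout, the translation dictionary is: membership/divisibility in $D$ corresponds to the order on $G(D)$, and "saturated multiplicatively closed" on the ring side corresponds, via Theorem \ref{thm-tfae}(iv), to "convex" on the group side, while "multiplicatively closed and containing units" corresponds to "directed subgroup" (subgroup generated by its positive cone).

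First I would verify (a). Given a saturated multiplicatively closed $S$ with $U(D) \subseteq S$, set $H = \langle \nu(S) \rangle$. Since $\nu(S) \subseteq G(D)^+$ (each $s \in S \subseteq D$ gives $\nu(s) \geq e$), and $H$ is the subgroup it generates, $H$ is automatically directed, hence it remains to show $H$ is convex; by Theorem \ref{thm-tfae}(iv) it suffices to show that if $g_1, g_2 \in G(D)^+$ and $g_1 g_2 \in H$, then $g_1, g_2 \in H$. Writing $g_i = \nu(d_i)$ for $d_i \in D \setminus \{0\}$ (possible since $g_i \geq e$), the key observation is that $H^+ = \nu(S)$ itself: an element of $H$ that is positive, being a product of $\nu(s_j)^{\pm 1}$, when cleared of denominators using that $S$ is multiplicatively closed, is $\nu(a)$ with $\frac{a \cdot (\text{denominator in } S)}{\,\text{numerator in } S\,} \in D$, and saturation of $S$ forces $a \in S$. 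Then $\nu(d_1 d_2) = g_1 g_2 \in H^+ = \nu(S)$ means $d_1 d_2 u \in S$ for some unit $u$, so $d_1 d_2 \in S$ (as $U(D) \subseteq S$), and saturation of $S$ gives $d_1, d_2 \in S$, i.e.\ $g_1, g_2 \in \nu(S) \subseteq H$. I expect the careful proof that $H^+ = \nu(S)$ — clearing denominators and invoking saturation — to be the main technical obstacle; everything else is bookkeeping.

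For (b), given an $o$-ideal $H$, let $T(H) = \{d \in D \setminus\{0\} : \nu(d) \in H\}$ (note this automatically contains $U(D)$ since $\nu(U(D)) = \{e\} \subseteq H$). Multiplicative closure is clear since $H$ is a subgroup and $\nu$ is multiplicative; $0 \notin T(H)$ by construction. Saturation: if $d = d_1 d_2 \in T(H)$ with $d_1, d_2 \in D \setminus \{0\}$, then $\nu(d_1), \nu(d_2) \in G(D)^+$ and $\nu(d_1)\nu(d_2) = \nu(d) \in H$; convexity of $H$ via Theorem \ref{thm-tfae}(iv) gives $\nu(d_1), \nu(d_2) \in H$, so $d_1, d_2 \in T(H)$. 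For (c), the composite $S \mapsto T(\langle \nu(S)\rangle)$ returns $\{d : \nu(d) \in \langle\nu(S)\rangle\} = \{d : \nu(d) \in \nu(S)\}$ by the $H^+ = \nu(S)$ fact, and this equals $S$ by saturation of $S$ (and $U(D) \subseteq S$); the composite $H \mapsto \langle \nu(T(H))\rangle$ is $\langle \nu(S) \rangle$ with $S = T(H)$, and since $H$ is directed, $H = \langle H^+ \rangle = \langle \nu(T(H))\rangle$ because $H^+ = \{\nu(d) : d \in D\setminus\{0\},\ \nu(d) \in H\} = \nu(T(H))$ — here again every positive element of $G(D)$ has the form $\nu(d)$. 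Finally, for (d), I would exhibit the natural map $G(D) = k^\times/U(D) \onto k^\times / U(D_S) = G(D_S)$ induced by $U(D) \subseteq U(D_S)$; its kernel is $U(D_S)/U(D)$, which I claim equals $\langle \nu(S)\rangle$: an element $\frac{s_1}{s_2} U(D)$ with $s_i \in S$ is a unit in $D_S$, giving $\langle\nu(S)\rangle \subseteq \ker$, and conversely a unit $\frac{a}{b}$ of $D_S$ ($a,b \in D$) has $\frac{a}{b}, \frac{b}{a} \in D_S$, so $a, b$ differ from elements of $S$ by units, placing $\nu(a/b) \in \langle\nu(S)\rangle$. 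One checks this surjection is an $o$-epimorphism (a coset is $\geq e$ in $G(D_S)$ iff some representative lies in $D_S$ iff, adjusting by an element of $S$, some representative lies in $D$, i.e.\ it is hit by $G(D)^+$), so Theorem \ref{thm-first-o-iso} yields $G(D)/\langle\nu(S)\rangle \simeq G(D_S)$.
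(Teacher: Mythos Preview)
The paper does not supply its own proof of Theorem~\ref{thm-mott}; the result is attributed to Mott \cite{mott1974convex} and simply quoted. So there is no in-paper argument to compare against.

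That said, your outline is sound and is essentially the standard proof. The crucial technical step you flag --- that $\langle \nu(S)\rangle^{+} = \nu(S)$ for a saturated $S$ --- goes through exactly as you sketch: any positive element of $\langle \nu(S)\rangle$ has the form $\nu(s/t)$ with $s,t\in S$, and positivity forces $s/t \in D$ up to a unit, whence saturation of $S$ puts the representative back in $S$. Your use of Theorem~\ref{thm-tfae}(iv) to pass between ``saturated'' and ``convex'' is precisely the translation the paper sets up, and the identification $G(D)/\langle\nu(S)\rangle \simeq G(D_S)$ via the First $o$-Isomorphism Theorem is the intended route. One small tightening in part~(d): when you write ``$a,b$ differ from elements of $S$ by units,'' it is cleaner to argue directly that $x = a/s$, $x^{-1} = b/t$ with $a,b\in D$ and $s,t\in S$ gives $ab = st \in S$, so saturation yields $a,b\in S$ and hence $\nu(x)\in\langle\nu(S)\rangle$.
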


Mott's correspondence suggests the existence of a functor connecting the environments relating canonical inclusions between localizations of $D$ to o-epimorphisms between quotient po-groups of $G(D)$. 

\begin{defn}\label{categs}
Fix an integral domain $D$ with quotient field $K$.
\begin{enumerate}[(a)]

\item \label{categ-r} Let $\Re$ be all localizations of $D$ at saturated multiplicatively closed subsets. For saturated multiplicatively closed subsets, $S$, $T$, define the morphisms
\[\text{Hom}(D_S,D_T)=\left\{\epsilon_{S,T}:D_{S} \to D_{T} \mid \epsilon_{S,T}\left(r\right)= r/1\right\}\] whenever $S \subseteq T$ and $\emptyset$ otherwise. Then $\Re$ is a category.

\item \label{categ-g} Let $\mathfrak{G}$ be all quotient groups of $G(D)$ via o-ideals. For o-ideals $H$ and $L$ define the morphisms \[\text{Hom}\left(G/H, G/L\right)= \left\{\pi_{H,L}:G/H \to G/L \vert \pi_{H,L}\left(gH\right) = gL\right\}\] whenever $H \subseteq L$ and $\emptyset$ otherwise. Then $\mathfrak{G}$ is a category.

\end{enumerate}
\end{defn}

It is clear that $\Re$ has an initial object ($D$), a terminal object ($K$), $\mathfrak{G}$ has an initial object ($G(D)$) and $\mathfrak{G}$ has a terminal object (the trivial group). We flesh out Mott's Correspondence Theorem by describing a functor between these categories. Recall that a functor is \textit{essentially surjective} if it is surjective on objects.

\begin{thm}
Fix an integral domain $D$. Let $\digamma: \Re \to \mathfrak{G}$ be the functor defined by
\begin{align*}
\digamma(D_S)& =G(D)/\langle\nu(S)\rangle\\
S \subseteq T \Rightarrow \digamma(\epsilon_{S,T}) &=\pi_{\langle \nu(S)\rangle,\langle \nu(T)\rangle}
\end{align*}
then $\digamma$ defines an covariant functor from the category $\Re$ to the category $\mathfrak{G}$. Furthermore, Mott's correspondence implies this functor is essentially surjective.
\begin{figure}[h!]
\centerline{
\xymatrix{
D_S \ar[r]^-{\epsilon_{S,T}} \ar[d]^-{\digamma} & D_T \ar[d]^-{\digamma} \\ G(D_S) \ar[r]^-{\pi_{\langle \nu(S),\nu(T)\rangle}} & G(D_T)
}}
\caption{The commutative diagram implied by the group of divisibility functor. Injections between saturated localizations correspond to projections between groups of divisibility.}
\end{figure}

\label{thm-funct}
\end{thm}
\begin{proof}
We have that $\digamma(\epsilon_{S,S}) = \pi_{\langle \nu(S)\rangle,\langle \nu(S)\rangle}$ is defined as the identity function. That is, $\digamma(\epsilon_{S,S}) = \text{id}_{G(D)/\langle \nu(S)\rangle}= \text{id}_{\digamma(D_S)}$. Also, for the containment $S \subseteq T \subseteq U$, then certainly $\epsilon_{S,U} = \epsilon_{T,U} \circ \epsilon_{S,T}$. Hence, we have that $\digamma(\epsilon_{S,U}) = \digamma(\epsilon_{T,U} \circ \epsilon_{S,T})$.  Of course, $\digamma(\epsilon_{S,U}) = \pi_{\langle\nu(S)\rangle,\langle\nu(U)\rangle}$, and we obtain
\begin{align*}
\digamma(\epsilon_{S,U}) = \pi_{\langle\nu(S)\rangle,\langle\nu(U)\rangle} &= \pi_{\langle\nu(T)\rangle,\langle\nu(U)\rangle} \circ \pi_{\langle\nu(S)\rangle,\langle\nu(T)\rangle}\\
&= \digamma(\epsilon_{T,U}) \circ \digamma(\epsilon_{(S,T)})
\end{align*}

Finally, for any $g\langle \nu(S)\rangle \in G(D)/\langle \nu(S)\rangle$, we have
\begin{align*}
\pi_{\langle \nu(T)\rangle,\langle \nu(U)\rangle} \circ \pi_{\langle \nu(S)\rangle,\langle \nu(T)\rangle}(g\langle \nu(S)\rangle) &= \pi_{\langle \nu(T)\rangle,\langle \nu(U)\rangle}(g\langle \nu(T)\rangle)\\
&= g\langle \nu(U)\rangle\\
&= \pi_{\langle \nu(S)\rangle,\langle \nu(U)\rangle}\left(g\langle \nu(S)\rangle\right)
\end{align*}

Hence $\digamma$ is a covariant functor. All that remains is to establish essential surjectivity, but this is guaranteed by Mott's correspondence: any po-group of the form $G(D)/H$ where $H$ is an o-ideal has an associated saturated multiplicatively closed subset $S \subseteq D$ such that $G(D)/H = G(D_S)$.
\end{proof}

With Theorem \ref{thm-funct} we translate between the group-theoretic setting and the ring-theoretic setting: a localization of $D$ may be compatibly associated with a quotient po-group of $G(D)$. The only question remaining is the choice of (saturated) multiplicatively closed sets we use in localization. One ostensibly obvious choice is the set of non-atomic elements. Below in Example \ref{exDontLocalizeAtNonAtomic} we demonstrate this set may not be multiplicatively closed. 

\begin{ex}
Let $x$ be an indeterminate over $\mathbb{F}_2$, let $\mathcal{X}$ be the set defined as $\mathcal{X} := \left\{x\right\}\cup\left\{x^{2/3^n}\mid n \geq 1\right\}$ and let $\m \subseteq \mathbb{F}_2[\mathcal{X}]$ be the maximal ideal generated by all monomials. Let $R := \left(\mathbb{F}_2\left[\mathcal{X}\right]\right)_{\m}$ be the localization at $\m$. Note since $R$ is the localization of $\mathbb{F}_2\left[\mathcal{X}\right]$ at the maximal ideal generated by all monomials, any term with a nonzero constant term (such as $1 + x^{2/3}$) is a unit. 

We claim that $x$ is irreducible and $x^{2/3}$ is not atomic, so the factorization
\[x\cdot x = x^{2/3} \cdot x^{2/3} \cdot x^{2/3}\]
demonstrates that the set of non-atomic elements in a domain is not necessarily multiplicatively closed. We also show elements of the form $x + x^{2/3}$ are irreducible (a similar argument can be used to determine that an element of the form $x + f(x)$ where $x \nmid f(x)$ is irreducible), so that the equation
\begin{align}\label{eqn:two-thirds-relation}
x^{2/3}(x+x^{2/3})(x+x^{2/3}) =& x^2(1+x^{2/3})
\end{align}
shows that in the field of fractions for $R$, we have
\[x^{2/3} = \frac{ux^2}{\xi^2}\]
where the expression on the right is a ratio of atoms. This provides an example of a non-atomic element, namely $x^{2/3}U(R)$, which is in the subgroup generated by the atoms of the group of divisibility.

To prove $x$ is irreducible, consider the additive monoid generated over $\bbn$ by $\left\{ 1, \frac{2}{3}, \frac{2}{9},\cdots,\frac{2}{3^n},\cdots\right\}$. We call this monoid the \textit{monoid of exponents}, which we shall denote $M$. Our terminology may be obvious, as the multiplicative monoid generated by all (associate classes of) monomials in $R$ is isomorphic to the additive monoid of exponents. Hence, establishing that $x$ is irreducible is equivalent to establishing $1$ is irreducible in $M$. We go further and establish that $x$ is the uniquely irreducible monomial (up to associates) by demonstrating that $1$ is uniquely irreducible in $M$.

Since $1 \in M$, it is also true that $\bbn \subseteq M$. Assume $1$ is reducible. Then we may write
\[
1=n_1\frac{2}{3}+\cdots +n_t\frac{2}{3^t}
\]
where we can choose, without loss of generality, $0\leq n_i\leq 2$ for all $1\leq i\leq t$ and $n_t\neq 0$. Multiplying both sides of this equation by $3^t$ we obtain
\[
3^t=2n_1(3^{t-1})+2n_2(3^{t-2})+\cdots +2n_{t-1}(3)+2n_t.
\]
Reducing this equation modulo $3$ gives that $2n_t \equiv 0$. Hence, $n_t$ is divisible by $3$, contradicting our previous choice. We conclude that $1$ is irreducible in $M$, i.e.\ $x$ is irreducible in $R$. 

Moreover, any other generator of $M$ decomposes as $\frac{2}{3^n} = \frac{2}{3^{n+1}}+\frac{2}{3^{n+1}}+\frac{2}{3^{n+1}}$; we conclude $1$ is uniquely irreducible in $M$, and hence is $x$ is uniquely irreducible among monomials in $R$. Hence, all atomic monomials are associate to some $x^n$.

We prove each element of the form $x + x^{2/3^n}$ (or, formally, $\frac{x+x^{2/3^n}}{1}$) is irreducible. Assume non-zero non-units $\alpha, \beta \in R$  satisfy $\alpha\beta = x + x^{2/3^n} \in R$. $R$ is a localization so there exists some polynomials $f(x), g(x) \in \mathbb{F}_2[\mathcal{X}]$ and $h(x), k(x) \notin \m \subseteq \mathbb{F}_2[\mathcal{X}]$ such that $\alpha = \frac{f(x)}{h(x)}$, $\beta = \frac{g(x)}{k(x)}$. Moreover, since $\alpha \beta = \frac{x+x^{2/3^n}}{1} \in R$, we have that $f(x) g(x) = (x+x^{2/3^n})h(x)k(x)$ in 
$\mathbb{F}_2[\mathcal{X}]$. 

Since $h, k \notin \m \subseteq \mathbb{F}_2[\mathcal{X}]$, $h(0) = k(0)=1$. Write $f = a_0 + \sum_{i=1}^{N_f} a_i x^{n_i} \in \mathbb{F}_2[\mathcal{X}]$ and $g = b_0 + \sum_{i=1}^{N_g} b_i x^{m_i} \in \mathbb{F}_2[\mathcal{X}]$ for some $N_f, N_g$, and each $0 \neq n_i, m_i \in M$.  Since $\alpha, \beta$ are assumed to be non-zero non-units, we have that $a_0 = b_0 = 0$. 
\begin{align*}
    \underbrace{(\sum_{i=1}^{N_f} a_i x^{n_i})}_{f(x)}\underbrace{(\sum_{i=1}^{N_g} b_i x^{m_i})}_{g(x)} =& (x+x^{2/3^n})\underbrace{(1 + \cdots)}_{h(x)}\underbrace{(1+\cdots)}_{k(x)}
\end{align*}
In particular, we have some $n_i + m_j = 1$, which is irreducible in $M$, so $n_i = 0$ or $m_j = 0$, contradicting our choice of $f$ and $g$ as non-units. Hence, $x+x^{2/3^n}$ is irreducible. 
\label{exDontLocalizeAtNonAtomic}
$\triangle$\end{ex}

This example is more complicated than necessary to demonstrate the set of non-atomic elements is not multiplicatively closed, but we use it again for other results later. Example \ref{exDontLocalizeAtNonAtomic} presents a valuable opportunity to expand on the notion of the atomic subgroup. This inspires the following definition, first set forth in \cite{boynton2013graph}:

\begin{defn}
Given an integral domain $D$, we say $x \in D$ is an \textit{almost-atomic domain element} if there exists some atomic element $a \in D$ such that $xa$ is an atomic element. We refer to any domain in which all elements are almost-atomic elements as an \textit{almost-atomic domain}. 
\label{def:almost-atomic}
\end{defn}

We produce an example of an integral domain in which the non-atomic elements form a multiplicatively closed set, but localization at this set is still not favorable.

\begin{ex}\label{first-pathological-example}
Let $R$ be the discrete valuation domain from the introduction with Krull dimension $2$ and prime spectrum $(0)\subset \p \subset \m $. The maximal ideal, $\m$, is principal; we select a generator, $x \in \m$, and write $\m = (x)$. Any element of $\m \setminus\p$ can be uniquely (up to associates) factored into a power of $x$. Although $R$ is not atomic, any proper overring of $R$ is atomic. Any non-zero non-unit element of $R$ that is not a power of $x$ is also not atomic. The set of non-unit, non-atomic elements (i.e.\ $\p \setminus 0$) is certainly multiplicatively closed, but its saturation is $R\setminus 0$. The corresponding localization, $R_S$ with $S = R\setminus 0$, is the quotient field of $R$; every element in the localization is a unit.
$\triangle$\end{ex}

Examples \ref{exDontLocalizeAtNonAtomic} and \ref{first-pathological-example} discourage choosing the set of non-atomic elements while using Theorems \ref{thm-mott} and \ref{thm-funct}; perhaps we ought to localize at sets generated by atomic elements. Our strategy is now to turn our attention to the saturated multiplicatively closed subsets of $D$ generated by the atomic elements, or equivalently, by citing Theorem \ref{thm-mott}, we turn our attention to subgroups of $G(D)$ generated by its atoms. To this end, we make the following definitions:

\begin{defn}
Similar to Definition \ref{def:almost-atomic}, we say $x$ is a \textit{quasi-atomic domain element} if there exists some $y \in D$ such that $xy$ is atomic. We refer to any integral domain in which all elements are quasi-atomic as a \textit{quasi-atomic domain}.
\label{def:atomicAndquasiAtomic:one}
\end{defn}

Certainly every almost-atomic element (domain) is a quasi-atomic element (domain, respectively). It is natural to demand an example of a quasi-atomic element or domain that is not almost-atomic to discern the difference between these concepts. Construction of examples of quasi-atomic domains that are not almost-atomic is a delicate task. In \cite{lebowitz2016classifying}, it is shown that the domain $D = \bbz[x] + x^2\bbr[x]$ is quasi-atomic but not almost-atomic.

We take the ring-theoretic concepts of atomicity and quasi-atomicity and extend them to be po-group-theoretic:

\begin{defn}Given a po-group, $G$, we define the \textit{atomic subgroup}, $A(G)\subseteq G$, to be the subgroup of $G$ generated by the atoms (minimal positive elements) of $G$.  We say $g \in G$ is a \textit{quasi-atomic group element} if $g \in G^+$ and if there exists some $h \in G^+$ such that $gh \in A(G)$. We refer to the subgroup of $G$ generated by the set of all quasi-atomic group elements, $Q(G) \subseteq G$, as the \textit{quasi-atomic subgroup}.
\label{def:atomicAndquasiAtomic:two}
\end{defn}

Notice that there is an obvious way of defining an almost-atomic subgroup, but this subgroup coincides with $A(G)$. Also notice that, in general, the atomic subgroup, $A(G)$, is not an o-ideal; in fact, in Theorem \ref{thm-sat}, we prove that the quasi-atomic subgroup, $Q(G)$, is the smallest o-ideal containing $A(G)$. 

These subgroups similarly detect information about factorization in $D$. For the group of divisibility, if $D$ is an atomic domain, then $G(D) = A(G(D))$, and if $G(D) = A(G(D))$ then $D$ almost-atomic. Since an antimatter domain contains no irreducibles, there are no quasi-atomic elements. Hence, antimatter domains have groups of divisibility with $Q(G(D)) = A(G(D)) = \left\{e_{G(D)}\right\}$.  For an arbitrary po-group, $G$, which may or may not be a group of divisibility, if $A(G) = G$. Then $G$ is directed because, in this case, $G$ is generated by the minimal positive elements. 

Every atomic element (domain) is almost-atomic, and every almost-atomic element (domain) is quasi-atomic. We have $\left\{e_G\right\} \subseteq A(G) \subseteq Q(G) \subseteq G$ and these are strict in general. For an example of strict containment in $Q(G) \subset G$,  take any antimatter domain $D$ that is not a field. Then $Q(G(D))$ is trivial since $D$ has no atoms, but $G(D)$ is nontrivial since $D$ is not a field. For an example of strict containment in $\left\{e_G\right\} \subset A(G) \subset Q(G)$, recall the domain $D = \bbz[x] + x^2\bbr[x]$ from \cite{lebowitz2016classifying} is quasi-atomic but not almost-atomic. Hence we have that $A(G(D)) \subset Q(G(D))$ and we have the proper containment $\left\{e_G\right\} \subset A(G(D))$ since $D$ contains irreducibles. Lemma \ref{thm-struct-final} shows that the quasi-atomic subgroup and almost-atomic subgroup do not detect the difference between direct sums and direct products.

\begin{lem}
Let $\left\{G_{\alpha}\mid \alpha \in \Lambda \right\}$ be a family of po-groups. Define $G = \oplus_{\alpha} G_{\alpha}$ and $H = \prod_{\alpha} G_{\alpha}$, both in the product order.  Then $A(G) = A(H) = \oplus_{\alpha} A(G_{\alpha})$ and $Q(G) = Q(H) = \oplus_{\alpha} Q(G_{\alpha})$.
\label{thm-struct-final}
\end{lem}
\begin{proof}
Observe that in both the direct product and the direct sum, the minimal positive elements, if they exist, are sequences of the form $m_\beta =\{\epsilon_\alpha\}$ where
\[
\epsilon_\alpha=
\begin{cases}
0, \text{ if } \alpha \neq \beta \\
p, \text{ if }\alpha =\beta
\end{cases}
\]
where $p$ is an atom in $G_\alpha$. Thus the generating sets for $A(G)$ and $A(H)$ are identical. Further, we claim that $Q(H)^+ \subseteq Q(G)^+$. To show this, it is sufficient to show that an element of $Q(H)^+$ has only a finite collection of non-zero coordinates (despite that $Q(H)^+ \subseteq H$, which is the direct product). Let $(x_\alpha) \in Q(H)^+$. Since $H$ has the product order, each $x_\alpha$ is non-negative in $H$. Since $(x_\alpha) \in Q(H)^+$, there exists some $(h_\alpha) \in H^+$ such that $(x_\alpha + h_\alpha) \in A(H)$. We claim that both $(x_\alpha)$ and $(h_\alpha) \in Q(G)^+$, namely that they both have a finite collection of non-zero coordinates. We have shown that $A(H) = A(G)$ and we have $(x_\alpha + h_\alpha) \in A(G)$ so only a finite collection of $x_\alpha + h_\alpha$ are nonzero. Since $x_\alpha$ and $h_\alpha$ are both non-negative, their sum is zero if and only if both $x_\alpha$ and $h_\alpha$ are zero. We conclude each of $(x_\alpha)$ and $(h_\alpha)$ have only a finite collection of non-zero coordinates. Thus, $(h_\alpha) \in G^+$ and so $(x_\alpha) \in Q(G)^+$.  This establishes that $Q(H)^+ \subseteq Q(G)^+$.

On the other hand, since $G \subseteq H$, we have $G^+ \subseteq H^+$. If $(x_\alpha) \in Q(G)^+$ then there exists some $(g_\alpha) \in G^+$ such that $(x_\alpha + g_\alpha) \in A(G) = A(H)$, so $(x_\alpha) \in Q(H)^+$.
\end{proof}

Unfortunately, Lemma \ref{thm-struct-final} does not extend to the lexicographically ordered direct sums or products. The generators $\left\{g \in G^+ \mid \exists h \in G^+, gh \in A(G)\right\}$ for $Q(G)$ are a specific case of a more general characterization:

\begin{defn}
Let $G$ be a multiplicative po-group and consider a chain of directed subgroups $H \subseteq L \subseteq G$. Consider the subgroup 
\[(H:L) := \langle \left\{g \in G^+ \mid \exists \ell \in L^+\text{ such that } \ell  g \in H\right\}\rangle\]
to be the $\leq$-\textit{semi-saturation} of $H$ with respect to $L$.
\label{def:orderSat}
\end{defn}

We use the term semi-saturation to avoid confusion with the notion of a convex subgroup from Theorem \ref{thm-tfae}, which corresponds with the notion of a saturated multiplicatively closed subset of a ring. We examine some of the properties of $(H:L)$. Note that if $G$ is not directed, $(H:G)$ may be the trivial group. Further, if $H, L$ are not directed, we can still define $(H:L)$ as in Definition \ref{def:orderSat}, but then we no longer are guaranteed that $H \subseteq (H:L)$. Lemma \ref{lem:semi-sat} demonstrates that subgroups of the form $(H:L)$ are nicely behaved in the sense that containments are prserved.
\begin{lem}\label{lem:semi-sat}
Let $G$ be a multiplicative po-group with an arbitrary chain of directed subgroups $H \subseteq L \subseteq L^{\prime} \subseteq  \subseteq G$. Then
\begin{enumerate}[(i)]
\item $H \subseteq (H:L)  \subseteq (H:L^{\prime}) \subseteq (H:G)$,
\item $(H:G) \subseteq (L:G) \subseteq (L^{\prime}:G)$, and
\item if $H$ is an o-ideal then $H=(H:G)$.
\end{enumerate}
\end{lem}
\begin{proof}
For (i), since $H$ is directed, each $h \in H^+$ can be written $h e_G$, and $e_G \in L^+$, so $h \in (H:L)$. If $g \in (H:L)^+$ then there exists some $\ell \in L$ such that $g \ell \in H$. But $\ell \in L \subseteq L^{\prime}$ so $g \in (H:L^{\prime})$. In particular, if $L^{\prime} = G$, we have $g \in (H:G)$. Thus, we have the containment $H \subseteq (H:L) \subseteq (H:L^{\prime}) \subseteq G$. For (ii), if $g \in (H:G)^+$, then $gg^{\prime} \in H \subseteq L \subseteq L^{\prime}$ for some $g^{\prime} \in G^+$. Hence, $g \in (L:G)^+$ and $(L^{\prime}:G)$. Likewise, if $g \in (L:G)^+$, then $g \in (L^{\prime}:G)^+$. For (iii), if $H$ is an o-ideal and $g \in G^+$ has some $g^{\prime} \in G^+$ such that $gg^{\prime} \in H$, then $e_G \leq g \leq g g^{\prime} \in H$, so $g \in H$. Thus, if $H$ is an o-ideal, then $(H:G) = H$. Following (i), we have that $H = (H:L)$ for any subgroup in the containment $H \subseteq L \subseteq G$.

\end{proof}

We apply Definitions \ref{def:atomicAndquasiAtomic:two} and \ref{def:orderSat} together with Lemma \ref{lem:semi-sat} to the atomic subgroup $H = A(G)$ of an arbitrary po-group. 
\begin{cor}
Let $G$ be a directed po-group and $A(G)$ the atomic subgroup and $Q(G)$ the quasi-atomic subgroup. Then $Q(G) = (A(G):G)$. Also, $A(G)$ is convex if and only if $A(G) = Q(G)$.
\end{cor}

\begin{thm}
If $G$ is a directed po-group with a directed subgroup, $H$, and $\mathcal{O}_{H}$ is the set of all o-ideals of $G$ containing $H$, then 
\[\left(H:G\right) = \underset{H^{\prime} \in \mathcal{O}_H}{\bigcap} H^{\prime}\]
In particular, $(H:G)$ is the smallest o-ideal containing $H$. 
\label{thm-sat}
\end{thm}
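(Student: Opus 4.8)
The plan is to prove two inclusions: that $(H:G)$ is contained in every $o$-ideal containing $H$, and that $(H:G)$ is itself an $o$-ideal containing $H$. The second statement of the theorem then follows immediately by taking $H = A(G)$, since $Q(G) = (A(G):G)$ by the remark preceding the theorem.

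First I would show $(H:G) \subseteq H'$ for every $H' \in \mathcal{O}_H$. It suffices to check this for the generators of $(H:G)$, so take $g \in G^+$ with $\ell + g \in H$ for some $\ell \in G^+$. Since $H \subseteq H'$, we have $\ell + g \in H'$. Now $g$ and $\ell$ are both positive, so $0 \leq g \leq g + \ell \in H'$; applying property (iv) of Theorem~\ref{thm-tfae} to $H'$ (valid since $H'$ is convex) gives $g \in H'$. Hence every generator of $(H:G)$ lies in $H'$, so $(H:G) \subseteq H'$. This direction is routine.

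The main work is showing $(H:G)$ is an $o$-ideal containing $H$. Containment of $H$ is easy: for $h \in H$ write $h = h^+ - h^-$ with $h^+, h^- \in H^+$ — wait, this needs care, since $H$ need not be directed. Instead, note that if $h \in H^+$ then taking $\ell = 0 \in G^+$ gives $\ell + h = h \in H$, so $h$ is a generator of $(H:G)$; thus $H^+ \subseteq (H:G)$, and since $(H:G)$ is a subgroup it contains $\langle H^+ \rangle$. For the general claim that $H \subseteq (H:G)$ one argues that any $h \in H$ satisfies: $h + \ell \in H$ for suitable $\ell \in G^+$ — indeed, since $G$ is directed (it is a group of divisibility, or more generally one may restrict attention to directed $G$), we can write $h = a - b$ with $a,b \in G^+$, so $h + b = a \in G^+ \cap H = H^+ \subseteq H$; this exhibits $h$ (via... hmm, $h$ itself may not be positive). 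The cleanest route: show $(H:G)$ is directed by showing it is generated by positive elements (its defining generators lie in $G^+$), hence $(H:G) = \langle (H:G)^+\rangle$, and separately show it is convex. For convexity I would verify property (iv) of Theorem~\ref{thm-tfae} directly: suppose $g_1, g_2 \in G^+$ with $g_1 + g_2 \in (H:G)$; I must produce $\ell_i \in G^+$ with $\ell_i + g_i \in H$ for each $i$. The difficulty is that $g_1 + g_2$ being in the \emph{subgroup generated by} the generating set need not obviously make $g_1, g_2$ generators. To handle this I would first establish that the generating set $S = \{g \in G^+ \mid \exists h \in G^+,\ g + h \in A(G)\}$ (resp.\ with $H$ in place of $A(G)$) is already closed under the relevant operations — in particular that $(H:G)^+ = S \cup \{0\}$, i.e.\ every positive element of the generated subgroup is already a generator. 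This is the crux: given $p \in (H:G)^+$, write $p = s_1 + \cdots + s_k - s_{k+1} - \cdots - s_m$ with each $s_j \in S$; choose $h_j \in G^+$ with $s_j + h_j \in H$; then $p + (s_{k+1} + \cdots + s_m) + (h_1 + \cdots + h_m)$... one accumulates a witness showing $p + (\text{something positive}) \in H$, using that $H$ is a subgroup and that sums of positives are positive. Once $(H:G)^+ = S \cup\{0\}$ is known, property (iv) is immediate, and directedness follows since $(H:G)$ is generated by $(H:G)^+$.

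I expect the genuine obstacle to be precisely this normalization step — proving that an arbitrary element of $(H:G)^+$, a priori only an alternating $\mathbb{Z}$-combination of elements of $S$, can be rewritten as a single element of $S$. The argument should go through by repeatedly using that if $g + h \in H$ and $g' + h' \in H$ with all four elements positive, then $(g + g') + (h + h') \in H$ with $g+g', h+h' \in G^+$, so $S$ is closed under addition; and that if $g + h \in H$, $p \in G^+$, and $p = g - g''$ with $g'' \in G^+$... the subtraction case is where directedness of $G$ (available for groups of divisibility) gets used to tame the negative terms. I would state the theorem, as written, for directed $G$ if needed, or simply carry the directed hypothesis implicitly since $G(D)$ is always directed.
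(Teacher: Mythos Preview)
Your strategy is the paper's: show $(H:G) \subseteq H'$ for each $H' \in \mathcal{O}_H$ via convexity of $H'$, note $(H:G)$ is directed since it is generated by positives, then establish convexity. For convexity the paper carries out your normalization step in one stroke rather than isolating $(H:G)^+ = S$ as a lemma. Using that $S$ is closed under addition, any $x \in (H:G)^+$ is written $x = p - n$ with $p, n \in S$; for $0 \le y \le x$ one has $n \le y+n \le p$, and if $s \in G^+$ witnesses $p + s \in H$, then $(p-(y+n)) + (y+n+s) = p+s \in H$ exhibits $y+n+s$ as a generator, whence $y = (y+n+s) - n - s \in (H:G)$. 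In your notation this is exactly the observation that the witness putting your element $s_1 + \cdots + s_k - s_{k+1} - \cdots - s_m$ into $S$ is $\ell = (s_{k+1} + \cdots + s_m) + h$, with $h$ a witness for $s_1 + \cdots + s_k$; the subtracted $s_j$ are already in $G^+$ and simply fold into $\ell$. No directedness of $G$ is needed --- your worry about the ``subtraction case'' is misplaced.

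The point you flagged and the paper leaves unaddressed is $H \subseteq (H:G)$. You are right that only $H^+ \subseteq (H:G)$ is immediate, giving $\langle H^+\rangle \subseteq (H:G)$; the full inclusion needs $H$ directed, which does hold in the intended application $H = A(G)$.
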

\begin{proof}
We first prove that $(H:G)$ is convex and directed, so $(H:G) \in \mathcal{O}_H$ and $\ds \cap H^{\prime} \subseteq (H:G)$. By construction, $(H:G)$ is generated by its positive elements and is thus directed. To establish convexity, suppose that $e_G \leq y\leq x$ where $x\in (H:G)^+$. We can write $x=pn^{-1}$ where $p, n$ are two positive generators of $(H:G)$ (that is, there are positive elements $s,t$ such that both $ps$ and $nt$ are in $H$). Reworking the previous inequality, we have $n\leq yn\leq p$ and so $p(yn)^{-1}$ and $yns$ are positive elements such that their product is an element of $H$, i.e.\ $ps\in H$. Therefore, we have that $yns\in (H:G)$ and since $n,s\in (H:G)$ as well, we have that $y\in (H:G)$. Thus, $(H:G)$ is convex by Theorem \ref{thm-tfae} and is therefore an o-ideal.  Hence, $\cap H^{\prime} \subseteq (H:G)$.

To establish that $(H:G) \subseteq \cap H^{\prime}$, let $x$ be a positive generator of $(H:G)$. There exists an element $g\in G^+$ such that $xg=h\in H$. Since $H\subseteq H^{\prime}$ and $H^{\prime}$ is convex, we have that $x\in H^{\prime}$. Hence, $(H:G) \subseteq H^{\prime}$ for any $H^{\prime} \in \mathcal{O}_{H}$, so $(H:G) \subseteq \displaystyle \cap H^{\prime}$.
\end{proof}
\begin{cor}
Let $G$ be a directed po-group with atomic subgroup $A(G)$. Then $Q(G)$ is the smallest o-ideal of $G$ containing $A(G)$.
\end{cor}
\begin{cor}
Let $G$ be a directed po-group. For any subgroup $H \subseteq G$, there exists a unique o-epimorphism $\pi_H: G \onto G/(H:G)$ such that if $f: G \to G^{\prime}$ is an o-homomorphism and $H \subseteq \ker(f)$ then $f$ factors through $\pi_H$ in the sense that there exists an o-homomorphism $\hat{f}:G/(H:G) \to G^{\prime}$ such that $f = \hat{f} \circ \pi_H$.
\end{cor}

Note that since $(H:G)$ is the smallest o-ideal of $G$ containing $H$, the containment $H \subseteq (H:L) \subseteq (H:G)$ suggests that $(H:L)$ is not convex in general. In the special case that $G=G(D)$, by Mott's Correspondence (Theorem \ref{thm-mott}), we have that $G/(H:G) = G(D_S)$ for some saturated multiplicatively closed set $S \subseteq D$.  If $A(G(D))$ is not convex then the multiplicatively closed set generated by all atoms in $D$ is not saturated, implying the existence of quasi-atomic elements in $D$ that are not almost-atomic.  To see how convexity is violated in a quasi-atomic group that is not almost-atomic, let $D$ be any integral domain with an element $x \in D$ that is quasi-atomic but not almost-atomic. Since $x$ is quasi-atomic, we have some $y \in D$ such that $xy$ is atomic. Thus, in the group of divisibility, we have that $U(D) \leq x U(D) \leq xy U(D)$ where $U(D) \in A(G(D))$ and $xy \in A(G(D))$. However, $xU(D)$ is not almost-atomic so $xU(D) \notin A(G(D))$.

Due to their connection with the saturated multiplicatively closed subsets generated by the atomic elements of $D$, o-ideals play a distinguished role in factorization. With Theorem \ref{thm-tfae}, we may project onto $G/Q(G)$ providing an o-epimorphism. With Theorem \ref{thm-mott}, the resulting po-group is a group of divisibility for an associated localization. In other words, for an integral domain $D_0:=D$ and ascending chain of saturated multiplicatively closed sets $S_0 \subseteq S_1 \subseteq \ldots$, defining $D_{n+1} := D_{S_{n}}$ for each $n \geq 0$, we can view any chain of canonical monomorphisms between localizations
\[D_0 \subseteq D_1 \subseteq D_2 \subseteq \ldots\]
as a chain of canonical po-group o-epimorphisms
\[G(D_0) \onto G(D_1) \onto G(D_2) \onto \ldots\]
where each $G(D_{n+1}) = G(D_n)/Q(G(D_n))$ by Mott's correspondence. We avoid difficulty with indexing by presuming $S_0 = U(D)$ so $D_0 \cong D$.

\begin{defn}
Let $G_{0}:= G$ be a po-group, $G_{1}:=G_{0}/Q(G_{0})$, and $\pi_0:G_0 \to G_1$ the natural o-epimorphism.  For each $n > 0$, inductively define $G_{n+1}:=G_{n}/Q(G_{n})$ and $\pi_n: G_n \to G_{n+1}$ the natural o-epimorphism. We say the sequence of o-epimorphisms
\[G_0 \onto G_1 \onto G_2 \onto \dots \]
is the \textit{quasi-atomic quotient sequence} of $G$.
\end{defn}

Given a general sequence of composable maps in any category, we say the sequence is \textit{stable at degree} $n$ (or simply \textit{stable}) if there exists a minimal index $n \in \bbn_0$ such that each morphism in degree $n + k \in \bbn_0$ is an isomorphism.  If a sequence is stable, the isomorphism class may be the terminal object in the category, in which case we say the sequence \textit{terminates}, or is \textit{terminally stable}, \textit{terminal}, or \textit{bounded}. Every terminally stable sequence is stable. The quasi-atomic quotient sequence stabilizes to o-isomorphisms if and only if some quotient group is antimatter. To see this, note that antimatter po-groups are precisely the po-groups for which $Q(G) = \left\{e_G\right\}$. On the other hand, the quasi-atomic quotient sequence terminally stabilizes if and only if some quotient group, $G_n$, is quasi-atomic. To see this, simply note that $G_n$ is quasi-atomic implies $G_{n+1}$ is trivial (and hence $G_{n+1}$ is antimatter). 

Below in Example \ref{ex-n-atomic} we provide examples of stable sequences and terminally stable sequences. We provide further examples in Section \ref{structure} from the ring-theoretic points of view.

\begin{ex}\label{ex-n-atomic}
\begin{enumerate}[(i)]
\item We say that a quasi-atomic quotient sequences that terminally stabilizes after a finite number of steps is \textit{bounded}. Let $G = \oplus_{i=1}^{n} \bbz$, denoted $\bbz^n$, and take the order on $G$ to be lexicographic. Then $G$ has a single atom, $(1, 0, \dots, 0)$, and $A(G) = \bbz \oplus 0^{n-1} \simeq \bbz$. Hence, we obtain \[G_1 = \frac{\bbz^{n}}{\bbz \oplus 0^{n-1}} \simeq \bbz^{n-1}\] ordered lexicographically. We obtain the bounded quasi-atomic quotient sequence
\[\bbz^n \to \bbz^{n-1} \to \dots \to \bbz^2 \to \bbz \to 0 \to \dots \]
which terminally stabilizes at degree $n$. $\triangle$

\item Some quasi-atomic quotient sequences non-termainally stabilize at a finite degree. Let $H = \bbz$ be ordered in the product order, and let $G = (\oplus_{i =0}^{n} H) \oplus \bbq$, ordered lexicographically. Note $G$ has a unique atom, $(1,0,0,\cdots,0)$, which generates $\bbz \oplus 0 \oplus 0 \oplus \cdots 0 $. Thus, $A(G) = H \oplus 0 \oplus 0 \oplus \cdots \oplus 0$, and we obtain 
\begin{align*}
    G_1 =&  \frac{(\oplus_{i=0}^{n} H)\oplus \bbq}{H  \oplus 0 \oplus \cdots \oplus 0}  \simeq (\oplus_{i=0}^{n-1} H) \oplus \bbq
\end{align*}
Inductively we obtain
\begin{align*}
    G_2 =& \frac{(\oplus_{i=0}^{n-1} H)\oplus \bbq}{H  \oplus 0 \oplus \cdots \oplus 0} \simeq (\oplus_{i=0}^{n-2} H) \oplus \bbq\\
    \vdots & \\
    G_{n-1} =& \frac{H\oplus H \oplus \bbq}{H \oplus 0\oplus 0 } \simeq H \oplus \bbq\\
    G_n =& \frac{H \oplus \bbq}{H \oplus 0} \simeq \bbq
\end{align*}
And for $m \geq n$, we have $G_m \simeq G_n$. We obtain the quasi-atomic quotient sequence
\[G_0 \to G_1 \to G_2 \to \cdots \to G_{n-1} \to \bbq \to \bbq \to \bbq \to \bbq \to \cdots \]
which is stable at degree $n$. $\triangle$

\item Some quasi-atomic quotient sequences never stabilize. Let $G = G_0 = \oplus_{i \in \bbn} \bbz$ be ordered lexicographically. Then $G$ has a unique atom, $(1,0,0,\ldots)$ generating $Q(G) = \bbz \oplus\left(\oplus_{i \geq 1} 0\right)$. Then $G_1 = G_0/Q(G_0) = \frac{\bbz \oplus \bbz \oplus \bbz \oplus \cdots}{\bbz \oplus 0 \oplus 0 \oplus \cdots} \simeq 0 \oplus \bbz \oplus \bbz \oplus \ldots$. Note $G_1 \simeq G_0$; in fact, we obtain the quasi-atomic quotient sequence 
\[(\oplus_{i \geq 0} \bbz) \to (\oplus_{i \geq 1} \bbz) \to (\oplus_{i \geq 2} \bbz) \to \cdots\] which is o-isomorphic in each degree to the sequence $G_0 \to G_0 \to \cdots$. However, each canonical o-epimorphism $\pi: G_0 \to G_1$ is not an o-isomorphism, so this sequence is not stable. $\triangle$
\end{enumerate}
\end{ex}

Example \ref{ex-n-atomic} demonstrate nontrivial cases of termination and stabilization, and that the finite cases can become arbitrarily long, even for simple examples. To demonstrate that these group-theoretic ideas are obtainable from the ring-theoretic motivations of this study, we provide some examples of integral domains that yield groups of divisibility that exhibit nontrivial termination and stabilization in Section \ref{structure}.

Classifying $G$ by the behavior of its quasi-atomic quotient sequence motivates the following definitions:

\begin{defn}\label{defn-atomic} Given the quasi-atomic quotient sequence of $G$, define the following:
\begin{enumerate}[(a)]
\item when $n\in \bbn_0$ is the least integer such that $G_n = \left\{e\right\}$, we say $G$ is $n$-\textit{atomic}, 
\item when $G$ is not $m$-atomic for any $m \in \bbn_0$ and $n \in \bbn_0$ is the least integer such that for each $k \in \bbn_0$, $\pi_{n+k}:G_{n+k} \to G_{n+k+1}$ is an isomorphism, we say $G$ is $n$-\textit{antimatter}, and
\item when $G$ is not $m$-atomic for any $m \in \bbn_0$ and not $n$-antimatter for any $n \in \bbn_0$, we say $G$ is \textit{mixed-atomic-antimatter}.
\end{enumerate}
\end{defn}

Note that the quasi-atomic quotient sequence may stabilize into a chain of o-isomorphic po-groups and yet the natural surjections, $\pi_n$, are not o-isomorphisms. 

An $n$-atomic po-group $G$ is terminally stable in the $n^{th}$ degree and thus is stable in the $n^{th}$ degree, and so $n$-atomic po-groups are $n$-antimatter. Groups satisfying these definitions provide some immediate properties.  For example, the only $0$-atomic group is the trivial group and the group of divisibility of an antimatter domain is $0$-antimatter. A nontrivial po-group is quasi-atomic if and only if it is $1$-atomic. For any nontrivial group $G$, $Q(G)$ is $1$-atomic as a group unto itself. If $G$ satisfies the ascending chain condition on o-ideals, then $G$ is $n$-atomic or $n$-antimatter for some $n$.  It is immediately clear that Example \ref{ex-n-atomic}(i) is $n$-atomic and Example \ref{ex-n-atomic}(ii) is $n$-antimatter.

We use Lemma \ref{thm-struct-final} to obtain Corollary \ref{structureCorollary}, which ties our notions of $n$-antimatter and $n$-atomic developed in Definition \ref{defn-atomic} in with direct sums in the product order.

\begin{cor}\label{structureCorollary}
Let $\left\{G_{\alpha} \mid \alpha \in \Lambda \right\}$ be a family of po-groups, $G=\oplus_{\alpha} G_{\alpha}$ in the product order, let $\left\{n_{\alpha} \mid \alpha \in \Lambda\right\}$ be a net of natural numbers such that the supremum $N=\sup_{\alpha \in \Lambda}\left\{n_{\alpha}\right\}$ is finite. The following hold:
\begin{enumerate}[(i)]
\item if for every $\alpha \in \Lambda$, $G_{\alpha}$ is $n_{\alpha}$-atomic, then $G$ is $N$-atomic;
\item if for every $\alpha \in \Lambda$, $G_{\alpha}$ is $n_{\alpha}$-antimatter, then $G$ is $N$-antimatter.
\end{enumerate}
\end{cor}
\begin{proof}
We index the following quotient groups:
\begin{align*}
G_0 &:= \oplus_{\alpha} G_{\alpha}\\
G_n &:=G_{n-1}/Q(G_{n-1})\text{ for any }n\geq 1\\
G^{(0)}_{\alpha} &:= G_{\alpha}\\
G^{(n)}_{\alpha} &:= G^{(n-1)}_{\alpha}/Q(G^{(n-1)}_{\alpha})\text{ for any }n\geq 1\\
\end{align*}
By Corollary \ref{direct-sums-respect-lex-prod}, $G_n \simeq \oplus_{\alpha} G^{(n)}_{\alpha}$ for any $n \geq 0$. Since each $G_{\alpha}$ is $n_{\alpha}$-atomic, each $G^{(n_{\alpha})}_{\alpha}$ is trivial.  Hence, we obtain the sequence $G_0 \to G_1 \to G_2 \to \cdots$, which is o-isomorphic to the sequence, in direct sum notation
\[\oplus_{\alpha} G^{(0)}_{\alpha} \to \oplus_{\alpha} G^{(1)}_{\alpha} \to \oplus_{\alpha} G^{(2)}_{\alpha} \to \dots\]
Since each $n_{\alpha} \leq N$, we have that this sequence terminates by the $N^{th}$ step and no earlier.

This establishes the first statement of the corollary. The second statement is proved similarly with all sequences stabilizing to a non-trivial group by the $N^{th}$ step, rather than terminating.
\end{proof}

\begin{ex} Let $R = \bbz$; since $\bbz$ is atomic (and hence $1$-atomic), we have that $G(R) \simeq Q(G(R))$. In fact, $Q(G(R)) \simeq \oplus_{i\in \bbn} \bbz$ under the product order. The o-isomorphism is precisely the map defined by $\pm p_1^{e_1}p_2^{e_2} ... p_n^{e_n} \mapsto \sum_{i=1}^{n}e_i$ in which $p_i$ is the $i^{th}$ prime integer, and each $e_m \in \bbn$ for $1 \leq m \leq n$.
\label{ex-terminates-immediately}
$\triangle$\end{ex}

In analogy to Example \ref{ex-terminates-immediately}, a CK domain $D$ in which every irreducible is prime has a group of divisibility $G(D)=\oplus_{i=1}^{n} \bbz$ under the product order where $n$ is the number of non-associate primes. Of course, any such domain is atomic and therefore has $0$-atomic quasi-atomic quotient sequence.

The above definitions relate to ideas presented by other authors. For example, in \cite{mott1974convex}, Mott developed a dimension theory for po-groups in the following way: for a totally ordered po-group, $G$, with $n$ distinct convex subgroups, $\dim(G) = n$. It is clear that for any totally ordered $G$ with $\dim(G) = n$, we have that the quasi-atomic quotient sequence is $m$-atomic or $m$-antimatter for some $m \leq n$. Following immediately from these definitions, if $V$ is a valuation domain then $G(V)$ is totally ordered, and if $V$ has Krull dimension $n$ then $\text{dim}(G(V))=n$. 

The quasi-atomic quotient sequence and Definition \ref{defn-atomic} cannot distinguish between quasi-atomic, almost-atomic, and atomic domains. The atomic subgroup in the group of divisibility not only contains all atoms but also all almost-atomic elements. Furthermore, the atomic subgroup is contained inside the quasi-atomic subgroup, which is the kernel of the differential. All quasi-atomic domain elements are units after each stage of localization. 

Rather than classifying the ``niceness'' of factorization behavior, these definitions classify the depth of pathological factorization. For a quasi-atomic $D$ we have the quasi-atomic quotient sequence $G(D) \to 0 \to 0 \to \cdots$ which terminally stabilizes in degree $n=1$. Only for integral domains with pathological factorization behavior will our construction yield interesting results.

\section{Cohomology Theory for Quotient Sequences}\label{homology}

In this section, we apply cohomological tools to the quasi-atomic quotient sequence. These tools roughly quantify the failure of atomicity within a po-group $G$, and hence may be applied to groups of divisibility to quantify how far an integral domain may be from being atomic. However, we approach this section with an eye toward a general po-group before we turn our attention to groups of divisibility in particular.  We are able to demonstrate that properties within cohomology groups correspond with factorization behavior. For example, the cohomology groups generated from the quasi-atomic quotient sequence need not be partially ordered abelian groups, and so may admit torsion elements. Furthermore, these torsion elements correspond to domain elements with specific factorization behavior.

In this section, we let $G$ be a multiplicative po-group with identity $1$ unless otherwise stated. Let $G_0:=G$ and consider the quasi-atomic quotient sequence
\[G_0 \to G_1 \to G_2 \to \dots\]
wherein $G_{n+1} = G_n/Q(G_n)$. We may construct a more detailed picture of the quasi-atomic quotient sequence including the atomic and quasi-atomic subgroups in the following commutative diagram:
\[\xymatrix{
G_0 \ar[r]^-{\pi_0} & G_1 \ar[r]^-{\pi_1} & G_2 \ar[r]^-{\pi_2} & \dots\\
Q(G_0) \ar[u] \ar[ddr] & Q(G_1) \ar[u] \ar[ddr] & Q(G_2) \ar[u] & \\
A(G_0) \ar[u] & A(G_1) \ar[u] & A(G_2) \ar[u] & \\
1 \ar[u] & 1 \ar[u] & 1 \ar[u] &
}\]
where upward arrows denote the natural inclusion maps and rightward arrows denote the natural o-homomorphism, $\pi_n$, restricted to their appropriate domains. Padding the diagram on the left with the trivial groups in the usual manner to obtain a bi-infinite diagram, we immediately obtain the cochain complexes
\begin{align*}
    A_\bullet =& \cdots \to 1 \to A(G_0) \to A(G_1) \to A(G_2) \to \cdots\\
    Q_\bullet =& \cdots \to 1 \to Q(G_0) \to Q(G_1) \to Q(G_2) \to \cdots\\
    Q_\bullet/A_\bullet =& \cdots \to 1 \to \frac{Q(G_0)}{A(G_0)} \to \frac{Q(G_1)}{A(G_1)} \to \frac{Q(G_2)}{A(G_2)} \to \cdots\\
\end{align*}
For short, we may denote $A(G_n) = A_n$ and $Q(G_n) = Q_n$. Note that all of these complexes are trivial in the sense that each map is the trivial o-homomorphism mapping all elements to the identity.  To construct cochain complexes that do not consist of the trivial o-homomorphisms in each degree, we consider the inverse images of the atomic and quasi-atomic subgroups pulled back through their differentials.

\begin{defn}
Define $\widehat{A_{n}}:=\pi_{n}^{-1}(A_{n+1})$ and $\widehat{Q_{n}}=\pi_{n}^{-1}(Q_{n+1})$.
\end{defn}

These groups allow us to resolve the quasi-atomic quotient sequence in a yet more detailed commutative diagram:
\begin{equation}
\xymatrix{
 \dots \ar[r] & G_{n-1} \ar[r]^-{\pi_{n-1}} & G_n \ar[r]^-{\pi_n} & G_{n+1} \ar[r]^-{\pi_{n+1}}  & \dots \\
\dots \ar[ddr] & \widehat{Q}_{n-1}  \ar[u] \ar[ddr] & \widehat{Q}_{n} \ar[u] \ar[ddr] & \widehat{Q}_{n+1} \ar[ddr] \ar[u] & \\
\dots \ar[ddr] & \widehat{A}_{n-1} \ar[u] \ar[ddr] & \widehat{A}_{n} \ar[u] \ar[ddr] & \widehat{A}_{n+1} \ar[u] \ar[ddr] \\
\dots \ar[ddr] & Q_{n-1} \ar[u] \ar[ddr] & Q_{n} \ar[u] \ar[ddr] & Q_{n+1} \ar[u]  \ar[ddr] & \dots  \\
& A_{n-1} \ar[u] &A_{n} \ar[u] & A_{n+1} \ar[u] & \dots\\
 & 1 \ar[u]& 1 \ar[u]& 1 \ar[u] & \dots
}\label{detailedCommutativeDiagram}
\end{equation}
where, again, each upward arrow denotes the natural inclusion map and each rightward arrow denotes the natural o-epimorphism, $\pi_n$, restricted appropriately. Certainly, the natural inclusion maps are each o-homomorphisms. Commutativity of this diagram is a standard diagram chase.

Recall from Section \ref{defsEtc} that if $H_2 \subseteq G_2$ is an o-ideal and $\phi:G_1 \to G_2$ is an o-epimorphism, then $\phi^{-1}(H_2)$ is an o-ideal of $G_1$.  Due to the containment of $A_{n+1} \subseteq Q_{n+1}$, we have the containment $A_{n} \subseteq Q_{n} \subseteq \widehat{A}_{n} \subseteq \widehat{Q}_{n}$. The subgroups $A_n$, $Q_n$, $\widehat{A}_n$, and $\widehat{Q}_n$ each yield cochain complexes with maps induced by $\{\pi_{n}\vert n\geq 0\}$ restricted appropriately, and we may build a variety of cochain complexes from these. We formalize a few of these with Lemma \ref{lem-cochain-complexes}, and we include a sketch of the proof for brevity.

\begin{lem}
Each of the following are cochain complexes of o-homomorphisms of po-groups:
\begin{align}
A_{\bullet}&:= \dots \longrightarrow 1  \longrightarrow A_{0} \longrightarrow  A_{1} \longrightarrow  A_{2} \longrightarrow  \dots \\
Q_{\bullet}&:= \dots \longrightarrow 1  \longrightarrow Q_{0} \longrightarrow Q_{1} \longrightarrow  Q_{2} \longrightarrow \dots \\
\widehat{A}_{\bullet}&:= \dots  \longrightarrow 1 \longrightarrow \widehat{A}_{0} \longrightarrow \widehat{A}_{1} \longrightarrow \widehat{A}_{2} \longrightarrow \dots \\
\widehat{Q}_{\bullet}&:= \dots \longrightarrow  1 \longrightarrow \widehat{Q}_{0} \longrightarrow \widehat{Q}_{1} \longrightarrow \widehat{Q}_{2} \longrightarrow \dots \\
\widehat{A}_{\bullet}/Q_{\bullet}&:= \dots \longrightarrow  1 \longrightarrow \widehat{A}_{0}/Q_{0} \longrightarrow \widehat{A}_{1}/Q_{1} \longrightarrow \widehat{A}_{2}/Q_{2} \longrightarrow \dots \\
\widehat{Q}_{\bullet}/Q_{\bullet}&:= \dots \longrightarrow  1 \longrightarrow \widehat{Q}_{0}/Q_{0} \longrightarrow \widehat{Q}_{1}/Q_{1} \longrightarrow \widehat{Q}_{2}/Q_{2} \longrightarrow \dots
\end{align}
in which the differentials, $\delta_n$, are naturally induced by the o-epimorphisms $\pi_n$.  Furthermore, the sequences $A_{\bullet}$, $Q_{\bullet}$, $\widehat{A}_{\bullet}/Q_{\bullet}$, $\widehat{Q}_{\bullet}/Q_{\bullet}$ are each trivial in the sense that each differential is the trivial homomorphism. Lastly, the sequence $\widehat{Q}_{\bullet}$ is exact.
\label{lem-cochain-complexes}
\end{lem}
\begin{proof}
We have that the above sequences form cochain complexes by the construction of the groups, verified by an easy diagram chase as previously described. Triviality and exactness follow immediately from the fact that $\ker \pi_{n} = Q_{n}$ and hence $A_n\subseteq \ker \pi_n$.
\end{proof}

Note we might be tempted to consider $Q_\bullet/A_\bullet$, but since each $A_n$ is not convex in general, the result is not a cochain complex of po-groups (although each degree is pre-ordered). The differentials in the cochain complexes of Lemma \ref{lem-cochain-complexes} are o-homomorphisms obtained by restricting the domain po-groups of the canonically determined o-epimorphisms. Furthermore, each $Q_n = \ker(\pi_n)$, so each differential is the zero map. We form short exact sequences of cochain complexes of po-group o-homomorphisms to obtain appropriate cohomology groups in Lemma \ref{three-little-ses} and demonstrate relationships between them in Definition \ref{def-coh}, the proof of which we omit to avoid a diagram chase.

\begin{lem}\label{three-little-ses}
The following sequences of cochain complexes of o-homomorphisms of po-groups are short exact:
\begin{align}
1 \to Q_{\bullet} \to & \widehat{A}_{\bullet} \to \frac{\widehat{A}_{\bullet}}{Q_{\bullet}} \to 1\\
1 \to Q_{\bullet} \to & \widehat{Q}_{\bullet} \to \frac{\widehat{Q}_{\bullet}}{Q_{\bullet}} \to 1
\end{align}
where the chain maps between the complexes are induced by inclusion or projection where appropriate.
\end{lem}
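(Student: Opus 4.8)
The plan is to reduce everything to a single lexicographic‑extension statement. By Lemma \ref{lem-cochain-complexes} the objects $Q_{\bullet}$, $\widehat{A}_{\bullet}$, $\widehat{AA}_{\bullet}$, $\widehat{Q}_{\bullet}$ and the quotients $\widehat{A}_{\bullet}/Q_{\bullet}$, $\widehat{AA}_{\bullet}/Q_{\bullet}$, $\widehat{Q}_{\bullet}/Q_{\bullet}$ are already cochain complexes of $o$-homomorphisms, and the inclusions and projections between them commute with the differentials $\pi_n$ (this is the commutativity recorded for the big diagram); so being a \emph{lex-exact sequence of cochain complexes} amounts to lex-exactness, in each degree $n$, of the short exact sequence $1 \to Q_n \to \widehat{H}_n \to \widehat{H}_n/Q_n \to 1$ for $\widehat{H} \in \{\widehat{A}, \widehat{AA}, \widehat{Q}\}$. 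I would then observe that each such sequence is the restriction, along the full preimage $\widehat{H}_n = \pi_n^{-1}(H_{n+1})$ (with $H_{n+1}$ one of $A_{n+1}, AA_{n+1}, Q_{n+1}$), of the sequence
\[1 \longrightarrow Q_n \longrightarrow G_n \overset{\pi_n}{\longrightarrow} G_{n+1} \longrightarrow 1.\]
Since $\widehat{H}_n$ carries the order of $G_n$ and $\pi_n$ restricts to an $o$-isomorphism $\widehat{H}_n/Q_n \xrightarrow{\;\simeq\;} H_{n+1}$, the $o$-exactness conditions and the positive-cone identity for the restricted sequence are obtained simply by intersecting the corresponding statements for $G_n$ with $\widehat{H}_n$. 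Hence it suffices to prove that, for every $n$, the sequence $1 \to Q_n \to G_n \to G_{n+1} \to 1$ is lex-exact.

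Next I would dispatch the routine parts of that. The map $\pi_n$ is an $o$-epimorphism with kernel $Q_n = Q(G_n)$ by construction of the quasi-atomic quotient sequence, so $\pi_n(G_n^+) = G_{n+1}^+$; the other $o$-exactness condition, $\iota(Q_n^+) = \iota(Q_n) \cap G_n^+$, is automatic because $Q_n^+$ is by definition $Q_n \cap G_n^+$. Moreover $Q_n$ is an $o$-ideal of $G_n$ by Theorem \ref{thm-sat}, a fact I use below. It remains to verify the lex condition, i.e.\ $G_n^+ = \{\, g \in G_n : \pi_n(g) \neq e \text{ and } \pi_n(g) \geq e \,\} \cup Q_n^+$.

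The inclusion $\supseteq$ is the whole content; $\subseteq$ is clear since $g \geq e$ forces $\pi_n(g) \geq e$, and if also $\pi_n(g) = e$ then $g \in Q_n \cap G_n^+ = Q_n^+$. For $\supseteq$: $Q_n^+ \subseteq G_n^+$ trivially, so fix $g$ with $\pi_n(g) \geq e$ and $\pi_n(g) \neq e$; then $g \notin Q_n$, and by the definition of the order on $G_{n+1} = G_n/Q_n$ there is $q \in Q_n$ with $gq \geq e$ in $G_n$. Writing $q = q_2^{-1}q_1$ with $q_1, q_2 \in Q_n^+$ (directedness of $Q_n$) and multiplying the inequality by $q_2$, we may assume $q \in Q_n^+$. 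Now convexity of $Q_n$ in $G_n$ already shows $g$ lies below no element of $Q_n$ — otherwise $q^{-1} \leq g \leq q'$ with $q^{-1}, q' \in Q_n$ would put $g$ in $Q_n$ — so in particular $g \not\leq e$. To promote this to $g \geq e$ I would use that $Q_n = (A(G_n):G_n)$: a positive element $q$ of this $\leq$-semisaturation is dominated by a positive element $b \in A(G_n)$ (writing $q = s_1 s_2^{-1}$ with $s_1, s_2$ among the positive generators $\{\, s \in G_n^+ : sw \in A(G_n) \text{ for some } w \in G_n^+ \,\}$ of $(A(G_n):G_n)$, one gets $q \leq s_1 \leq s_1 w = b \in A(G_n)$), so $gb \geq gq \geq e$ while $\pi_n(b) = e$ because $A(G_n) \subseteq Q_n = \ker\pi_n$, and $\pi_n(g) = \pi_n(gb)$ is still a nonidentity positive element of $G_{n+1}$; decomposing $b$ into atoms of $G_n$ and stripping them off one at a time, the minimality of each atom together with $\pi_n(g) \neq e$ forces $g$ to be comparable to $e$, and combined with $g \not\leq e$ this gives $g \geq e$.

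The step I expect to be the genuine obstacle is precisely this last one: showing that an element $g \in G_n$ with $\pi_n(g)$ a nonidentity positive element of $G_{n+1}$ is necessarily comparable to $e$ in $G_n$ — equivalently, that $G_n$ is a lexicographic extension of $G_{n+1}$ by $Q_n$. This is the only place where the description of $Q$ as a $\leq$-semisaturation, and of $Q(G_n)$ as the smallest $o$-ideal containing $A(G_n)$ (Theorem \ref{thm-sat}), has to be used in an essential way, and also the place calling for the most care — for instance one must check that a positive element of the atomic subgroup really does decompose as a product of atoms, using that distinct atoms are pairwise incomparable. Everything preceding it is formal.
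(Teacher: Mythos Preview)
The paper's argument is much shorter than yours: after noting short exactness and $o$-exactness (the latter ``immediately from the fact that the chain maps are induced from order-preserving inclusions and projections''), it verifies lex-exactness by taking $x\in\widehat{A}_n^{+}$ and observing that either $x\in Q_n$ (whence $x\in Q_n^{+}$) or $\pi_n(x)\in A_{n+1}^{+}$. That is only the inclusion $\widehat{A}_n^{+}\subseteq Q_n^{+}\cup\{x:\pi_n(x)\in(\widehat{A}_n/Q_n)^{+}\}$, which holds automatically for any $o$-epimorphism. You correctly recognized that the substance of lex-exactness lies in the \emph{reverse} inclusion, and you devote the bulk of your proposal to it.

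Unfortunately that reverse inclusion is false in general, so your atom-stripping step cannot be salvaged. Take $G_0=(\bbz\oplus\bbz)_{\mathrm{lex}}\times(\bbz\oplus\bbz)_{\mathrm{lex}}$ in the product order of the two lexicographic factors (with the convention of Example~\ref{ex-n-atomic} that $(1,0)$ is the atom of $(\bbz^2)_{\mathrm{lex}}$). By Theorem~\ref{thm-struct-final}, $Q_0=(\bbz\oplus 0)\times(\bbz\oplus 0)$; by Proposition~\ref{sumprop}, $G_1\simeq\bbz\times\bbz$ in the product order, which is atomic, so $\widehat{A}_0=\pi_0^{-1}(A_1)=G_0$ and the degree-$0$ sequence is $1\to Q_0\to G_0\to G_1\to 1$. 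The element $g=\big((-1,1),(-1,0)\big)$ has $\pi_0(g)=(1,0)$, a nonidentity positive element of $G_1$, yet $g\notin G_0^{+}$ since $(-1,0)\not\geq 0$ in $(\bbz^2)_{\mathrm{lex}}$; hence the sequence is not lex-exact. In your setup one may take $q=\big((1,0),(1,0)\big)\in Q_0^{+}$, the product of the two atoms $a_1=\big((1,0),(0,0)\big)$ and $a_2=\big((0,0),(1,0)\big)$, and indeed $gq\geq e$; but $g a_1=\big((0,1),(-1,0)\big)$ is incomparable with $e$, so ``the minimality of each atom together with $\pi_n(g)\neq e$ forces $g$ to be comparable to $e$'' fails exactly here. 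Your instinct that this was the genuine obstacle was right---it is an actual obstruction, not a difficulty to be overcome. Note, however, that the long exact sequence of Theorem~\ref{thm-long-exact} only requires the underlying short exact sequences of abelian groups, so nothing downstream uses the lex claim.
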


\begin{defn}\label{def-coh}
Let $X_{\bullet}$ denote any of the complexes from Lemma \ref{lem-cochain-complexes}, and denote the differential maps as $\delta_{i}: X_{i}\longrightarrow X_{i+1}$. For $n \in \bbz$, define the $n^{th}$ cohomology group of $X_{\bullet}$ by $H^n(G,X_{\bullet}):=\ker(\delta_{n})/\text{Im}(\delta_{n-1})$. \end{defn}

Definition \ref{def-coh}, together with the cochain complexes in Lemma \ref{lem-cochain-complexes}, yields the following cohomology groups for every $n \geq 1$:
\begin{align}
    \label{eqn1} H^n(G,A_\bullet) =& A_n,\\
    \label{eqn2}H^n(G,Q_\bullet) =& Q_n, \\
    \label{eqn3}H^n(G,\widehat{A}_\bullet) =& \frac{Q_n}{A_n},\\
    \label{eqn4}H^n(G,\widehat{Q}_\bullet) =& 1, \\
    \label{eqn5}H^n(G,\frac{\widehat{A}_\bullet}{Q_\bullet}) =& \frac{\widehat{A}_n}{Q_n} = \pi_n\left(\widehat{A}_n\right) = A_{n+1}, \\
    \label{eqn6} H^n(G,\frac{\widehat{Q}_\bullet}{Q_\bullet}) =& \frac{\widehat{Q}_n}{Q_n} = \pi_n\left(\widehat{Q}_n\right) = Q_{n+1},
\end{align}
and, further, these all hold true for $n=0$ except $H^0(G,\widehat{A}_\bullet) = H^0(G,\widehat{Q}_\bullet) = Q_0$.

We stress, as before, that the differentials of the cochain complexes are not o-epimorphisms in general. The cohomology groups of the form $H^n(G,\widehat{A}_\bullet) = Q_n/A_n$ are related to the gap between almost-atomicity and quasi-atomicity. Since $A_n$ is not convex in general, $H^n(G,\widehat{A}_\bullet)$ is not partially ordered under the inherited quotient order. Further, it may have torsion elements if there exists some $q \in Q_n$ such that $q^m \in A_n$ for some $m \in \bbn$. In the special case that $G$ is the group of divisibility for an integral domain $D$, torsion in a cohomology group corresponds to specific factorization behavior. Arbitrary elements of $H^n(G,\widehat{A}_\bullet)$ are cosets of the atomic subgroup $A_n$ with representatives from $Q_n$, i.e.\ $H^n(G,\widehat{A}_\bullet) = \left\{q + A_n \mid q \in Q_n\right\}$. These are torsion if and only if $q^k \in A_n$ for some $k \in \bbn$. Since these are elements of $G_n = G(D_{S_n})$, we see that torsion elements of $H^n(G,\widehat{A}_\bullet)$ correspond to the quasi-atomic elements of $D_{S_n}$ that are roots of almost-atomic elements.

Despite the lack of a partial order on the cohomology groups, we obtain examples of interesting factorization properties related to properties like torsion in the cohomology groups.  Example \ref{ex-torsion-homo} presents an integral domain whose quasi-atomic quotient sequence admits a cohomology group with torsion elements in $H^n(G/\widehat{A}_\bullet)$. These elements have a specific factorization interpretation. We use an iterative process of making polynomial extensions as described in \cite{Roi-Poly}, \cite{coy-ap-dom}, and \cite{coy-emb} to construct our example and interpret the result.

\begin{ex}\label{ex-torsion-homo}
Let $K = \bbr$, let $x,y$ be indeterminate over $K$, and consider the set $Y = \left\{y^{\alpha}; \alpha \in \bbq^+\right\}$. In $K[x,Y]$, let $\m$ denote the maximal ideal generated by all monomials, and consider $R_0 = (K[x,Y]_\m)/(x^2 + y^2)$. Since $x^2 + y^2$ is irreducible, this is an integral domain. Furthermore, $x$ is the only irreducible monomial.

Let $A_0$ be the set of all irreducible elements of $R_0$ not associated to $x$. For each $a_0\in A_0$, select a distinct indeterminate over $R_0$, say $w(a_0)$, define the set $\mathcal{W}_0 = \left\{w(a_0), \frac{a_0}{w(a_0)} \mid a_0 \in A_0\right\}$, and define $R_1 = R_0[\mathcal{W}_0]$.
From \cite[Lemma 2.5]{coy-ap-dom}, we have that $U(R_1) = U(R_0)$ so these new divisibility relationship are nontrivial. From \cite[Lemma 2.6]{coy-ap-dom}, $x$ remains irreducible in $R_1$. From \cite[Lemma 2.7]{coy-ap-dom}, any irreducible in $R_0$ not associated to $x$ is reducible in $R_1$.

We proceed inductively. For each $i \geq 1$, let $A_i$ be the set of all irreducible elements of $R_i$ not associate to $x$. For each $a_i \in A_i$, select a distinct indeterminate over $R_i$, say $w(a_i)$. Define the set $\mathcal{W}_i = \left\{w(a_i), \frac{a_i}{w(a_i)} \mid a_i \in A_i\right\}$ and the ring $R_{i+1} = R_i[\mathcal{W}_i]$. The sequence of canonical inclusions
\[R_0 \hookrightarrow R_1 \hookrightarrow \cdots\]
is an ascending chain of integral domains; the direct limit is then simply the union, and is an integral domain: define $R^{\prime} := \underset{\longrightarrow}{\lim} R_i$. In $R^{\prime}$, associates of $x$ are the only irreducible elements, and yet we have the relation $x^2 + y^2 = 0$. Thus, $y^2$ is atomic. 

We claim $y$ is not almost-atomic, which we prove by contradiction. Assume that $y$ is almost-atomic. Then $y$ may be regarded as a ratio of atoms in the field of fractions. Thus, $y = u x^{n}$ for some unit $u$ and $n \in \bbz$. Thus, $y^2 = u^2 x^{2n}$, and we already have that $y^2 = vx^2$ for a unit $v$. Thus, we have that $vx^2 = u^2 x^{2n}$. Thus, we have $x^2(u^2 x^{2n-2} - v) = 0$. Since $R^{\prime}$ is an integral domain, we have that $x^{2n-2}$ is a unit. Of course, $x$ is irreducible in $R^{\prime}$ so we conclude $n=1$ and hence $y = ux$. This contradicts our construction, in which $x$ is irreducible and $y$ is reducible.

Thus, $yU(R^{\prime}) \in G(R^{\prime}) \setminus A(G(R^{\prime}))$, and so is nontrivial in the first cohomology group, and yet its square, $y^2$, is atomic. We conclude $yU(R^{\prime})$ is $2$-torsion in $H^1(G(R^{\prime}), \widehat{A}_\bullet)$.
$\triangle$\end{ex}

Short exact sequences of complexes lead naturally to the long exact sequences in cohomology:

\begin{thm}
Let $1 \longrightarrow X_{\bullet} \longrightarrow Y_{\bullet} \longrightarrow Z_{\bullet} \longrightarrow 1$ denote any of the short exact sequences of cochain complexes from Lemma \ref{three-little-ses}. Then there exists a long exact sequence of group homomorphisms in cohomology:
\begin{align}
H^0(G, X_{\bullet} )\longrightarrow H^0(G, Y_{\bullet})\longrightarrow H^0(G,Z_{\bullet})\longrightarrow H^1(G,X_{\bullet})\longrightarrow\dots	\label{E:eqn22}
\end{align}\label{thm-long-exact}
\end{thm}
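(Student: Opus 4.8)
The plan is to run the standard snake-lemma argument, but carried out in the category of (quasi-ordered) abelian groups rather than an abelian category, using the lex-exactness hypothesis only where it is actually needed. First I would fix one of the three sequences $1 \to X_\bullet \to Y_\bullet \to Z_\bullet \to 1$ from Lemma \ref{three-little-ses} and recall that by Lemma \ref{three-little-ses} each level $1 \to X_n \to Y_n \to Z_n \to 1$ is $o$-exact, hence in particular short exact as a sequence of abelian groups (forgetting the order). Since we have already agreed — following Definition \ref{def-coh} and the remark preceding Example \ref{ex-torsion-homo} — that the cohomology groups $H^n(G,-) = \ker\delta_n/\operatorname{Im}\delta_{n-1}$ are just abelian groups (the quasi-order being irrelevant for the exactness statement), the target long exact sequence \eqref{E:eqn22} is a statement purely about abelian groups. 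So the proof reduces to: a short exact sequence of cochain complexes of abelian groups induces a long exact sequence in cohomology.

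The key steps, in order, are the usual ones. (1) Construct the connecting homomorphism $\partial^n : H^n(G,Z_\bullet) \to H^{n+1}(G,X_\bullet)$ by the zig-zag: given a cocycle $z \in Z_n$ with $\delta_n z = 1$, lift it to some $y \in Y_n$ (possible since $Y_n \to Z_n$ is surjective), push forward to $\delta_n y \in Y_{n+1}$, observe it dies in $Z_{n+1}$ (because $\delta$ commutes with the chain maps and $\delta_n z = 1$), hence $\delta_n y$ comes from a unique $x \in X_{n+1}$ (injectivity of $X_{n+1} \to Y_{n+1}$), and check $x$ is a cocycle (inject $\delta_{n+1}x$ into $Y_{n+2}$, it equals $\delta_{n+1}\delta_n y = 1$, use injectivity again). (2) Check $\partial^n$ is well-defined: independent of the choice of lift $y$ and of the choice of representative $z$ modulo coboundaries — this is the standard diagram chase, using that two lifts differ by something from $X_n$. (3) Verify exactness at each of the three spots $H^n(G,X_\bullet)$, $H^n(G,Y_\bullet)$, $H^n(G,Z_\bullet)$ in the sequence \eqref{E:eqn22}; again these are the six routine inclusions of the snake lemma. (4) Finally, note that the sequence as written starts at $H^0$ because all the complexes in Lemma \ref{lem-cochain-complexes} are concave — padded on the left with copies of the trivial group — so $H^n = 1$ for $n < 0$ and the long exact sequence genuinely begins with $H^0(G,X_\bullet) \to H^0(G,Y_\bullet) \to \cdots$ with no incoming connecting map.

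I do not expect a serious obstacle: the argument is the classical one and goes through verbatim once one observes that we are working with plain abelian groups. The only point deserving a sentence of care is why the order-theoretic subtleties do not interfere — namely that the maps $\delta_n$, being induced by the $o$-epimorphisms $\pi_n$ (for the $A$, $AA$, $Q$ complexes) and by restrictions/passages to quotients thereof, are in particular honest group homomorphisms, so $\ker$ and $\operatorname{Im}$ make sense and the snake lemma applies; the lex-exactness in Lemma \ref{three-little-ses} is stronger than what is needed here (ordinary short-exactness of abelian groups suffices for the long exact sequence of groups), and we simply do not claim the connecting maps or the long exact sequence respect any order. I would therefore present the proof as: "This is the long exact cohomology sequence associated to a short exact sequence of cochain complexes of abelian groups (Lemma \ref{three-little-ses}); we recall the construction of $\partial^n$ and leave the routine verification of exactness to the reader," spelling out the zig-zag definition of $\partial^n$ explicitly and remarking that the sequence starts at $H^0$ by the concavity of the complexes in Lemma \ref{lem-cochain-complexes}.
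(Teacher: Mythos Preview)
Your proposal is correct. The paper itself states Theorem \ref{thm-long-exact} without proof, evidently treating it as the standard long exact sequence associated to a short exact sequence of cochain complexes; your plan to forget the order, work in the category of abelian groups, and run the classical zig-zag/snake argument is exactly the intended justification, and your remark that lex-exactness is stronger than what is needed here (ordinary short-exactness suffices) is on point.
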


This immediately leads to Corollary \ref{cor-hom}, the proof of which requires only a citation of Theorem \ref{thm-long-exact} and Equations \ref{eqn1} - \ref{eqn6}.

\begin{cor}\label{cor-hom}
The following sequences of group homomorphisms are exact.
\begin{align*}
& 1 \to  Q_0 \to Q_0 \to \frac{\widehat{A}_0}{Q_0} \to  Q_1 \to \frac{Q_1}{A_1} \to \frac{\widehat{A}_1}{Q_1} \to  Q_2 \to \frac{Q_2}{A_2} \to \frac{\widehat{A}_2}{Q_2} \to \dots \\
& 1 \to 	 Q_0 \to Q_0 \to \frac{\widehat{Q}_0}{Q_0} \to Q_1 \to 1 \to \frac{\widehat{Q}_1}{Q_1} \to Q_2 \to 1 \to \dots \label{eq-les3}
\end{align*}
In particular, we recover:
\begin{enumerate}[(i)]
\item for any $n \geq 1$, $1 \to A_n \to Q_n \to \frac{Q_n}{A_n} \to 1$ is exact and
\item for any $n \geq 1$, $\frac{\widehat{Q}_{n}}{Q_{n}} \cong Q_{n+1}$.
\end{enumerate}
\end{cor}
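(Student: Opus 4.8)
The plan is to apply Theorem~\ref{thm-long-exact} to each of the three lex-exact sequences of cochain complexes listed in Lemma~\ref{three-little-ses}, and then to read off the resulting long exact sequences using the explicit cohomology groups computed in the displayed formulas preceding Theorem~\ref{thm-long-exact}. Concretely, for the sequence $1 \to Q_{\bullet} \to \widehat{A}_{\bullet} \to \widehat{A}_{\bullet}/Q_{\bullet} \to 1$, Theorem~\ref{thm-long-exact} yields $H^0(G,Q_{\bullet}) \to H^0(G,\widehat{A}_{\bullet}) \to H^0(G,\widehat{A}_{\bullet}/Q_{\bullet}) \to H^1(G,Q_{\bullet}) \to \cdots$; substituting $H^0(G,Q_{\bullet}) = Q_0$, $H^0(G,\widehat{A}_{\bullet}) = Q_0$, $H^0(G,\widehat{A}_{\bullet}/Q_{\bullet}) = \widehat{A}_0/Q_0$, and, for $n \geq 1$, $H^n(G,Q_{\bullet}) = Q_n$, $H^n(G,\widehat{A}_{\bullet}) = Q_n/A_n$, $H^n(G,\widehat{A}_{\bullet}/Q_{\bullet}) = \widehat{A}_n/Q_n$, produces exactly the first displayed sequence (prepending the trivial group, exactness at the leading $Q_0$ being the injectivity of the identity map $H^0(G,Q_{\bullet}) \to H^0(G,\widehat{A}_{\bullet})$). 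Repeating verbatim with $A$ replaced by $AA$, and then by $Q$, gives the second and third displayed sequences, where for the third one also uses $H^n(G,\widehat{Q}_{\bullet}) = 1$ for $n \geq 1$.

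For assertions (i) and (ii) I would extract short exact sequences from the long ones. The key observation is that in each degree the map $H^{n-1}(G,\widehat{A}_{\bullet}) \to H^{n-1}(G,\widehat{A}_{\bullet}/Q_{\bullet})$ induced by the projection $\widehat{A}_{\bullet} \onto \widehat{A}_{\bullet}/Q_{\bullet}$ is the zero map: a cocycle representing a class of $H^{n-1}(G,\widehat{A}_{\bullet})$ lies in $\ker(\pi_{n-1}|_{\widehat{A}_{n-1}}) = Q_{n-1}$, hence is killed by the quotient modulo $Q_{n-1}$. By exactness the connecting homomorphism $\widehat{A}_{n-1}/Q_{n-1} \cong A_n \to Q_n = H^n(G,Q_{\bullet})$ is therefore injective, and since the next map $Q_n \to Q_n/A_n$ is the evident quotient (hence surjective), exactness at $Q_n$ and at $Q_n/A_n$ gives $1 \to A_n \to Q_n \to Q_n/A_n \to 1$; the argument for (ii) is identical with $A$ replaced by $AA$. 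For (iii), in the third long exact sequence $H^n(G,\widehat{Q}_{\bullet}) = 1$ for all $n \geq 1$ because $\widehat{Q}_{\bullet}$ is exact by Lemma~\ref{lem-cochain-complexes}; thus the stretch $1 = H^n(G,\widehat{Q}_{\bullet}) \to H^n(G,\widehat{Q}_{\bullet}/Q_{\bullet}) \to H^{n+1}(G,Q_{\bullet}) \to H^{n+1}(G,\widehat{Q}_{\bullet}) = 1$ forces $\widehat{Q}_n/Q_n = H^n(G,\widehat{Q}_{\bullet}/Q_{\bullet}) \cong H^{n+1}(G,Q_{\bullet}) = Q_{n+1}$.

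The only point demanding care is the bookkeeping: one must keep straight the cohomology identifications recorded before Theorem~\ref{thm-long-exact} and, for (i)--(iii), check that the maps occurring in the long exact sequence really are the natural inclusions and quotient maps, so that the composite identifications (for instance $A_n \hookrightarrow Q_n \onto Q_n/A_n$) are the ones asserted. This amounts to unwinding the standard construction of the connecting homomorphism and of the functorially induced maps on cohomology, applied to complexes whose differentials are either a restriction of some $\pi_n$ or trivial; it is routine, and once the identifications are pinned down the three displayed long exact sequences and the three consequences drop out.
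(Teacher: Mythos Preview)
Your proposal is correct and matches the paper's approach: the corollary is stated without proof and follows immediately from substituting the cohomology computations (displayed just before Theorem~\ref{thm-long-exact}) into the long exact sequence of Theorem~\ref{thm-long-exact} for each of the three lex-exact sequences of Lemma~\ref{three-little-ses}. Your treatment of (i)--(iii) is more detailed than the paper's, which simply labels them ``obvious'' and, in the remark following the corollary, justifies (iii) directly via the definition of $\pi_n$ and the First $o$-Isomorphism Theorem rather than by reading it off the third long exact sequence as you do; both arguments are fine.
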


\begin{ex}\label{ex-homo}
Let $K$ be a field and consider the 4-dimensional valuation domain
\[
R = \left(K\left[x_1, x_2, x_3, x_4, \frac{x_2}{x_1^j}, \frac{x_3}{x_2^j}, \frac{x_4}{x_3^j}\right]\right)_{\m}
\]
where $j$ ranges over all integers $\geq 1$ and $\m$ is the maximal ideal generated by all indeterminate elements over $K$. This domain has group of divisibility o-isomorphic to $\bbz^4$ ordered lexicographically; in particular, we have a single irreducible, $x_1$. After the first localization, we have only one irreducible, $\frac{x_2}{1}$.  Localizing twice yields a single irreducible, $\frac{x_3}{1}$, localizing a third time yields a single irreducible, $\frac{x_4}{1}$, and localizing a fourth time yields the quotient field.

Observe, however, that our quasi-atomic subgroup coincides with our atomic subgroup, which is o-isomorphic to $\bbz$. Hence, our quasi-atomic quotient sequence is o-isomorphic to the sequence (with each product ordered lexicographically):
\[ \bbz^4 \to \bbz^3 \to \bbz^2 \to  \bbz \to 0 \to 0 \to \dots\]
and thus, by Definition \ref{defn-atomic}, $R$ is $4$-atomic. We resolve this sequence in detail as in Figure \ref{fig:detailedCommutativeDiagram}.
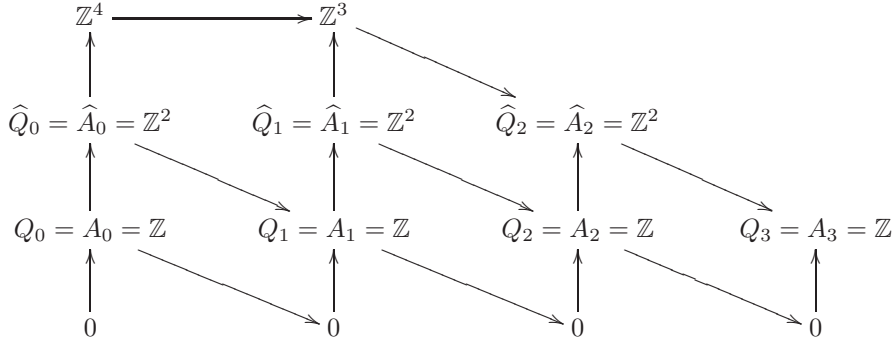
\begin{figure}
\xymatrix{
\bbz^4 \ar[r] & \bbz^3 \ar[dr] & & \\
\widehat{Q}_0 = \widehat{A}_{0} = \bbz^2 \ar[u] \ar[dr] & \widehat{Q}_1 =\widehat{A}_{1} = \bbz^2 \ar[u] \ar[dr] & \widehat{Q}_2 = \widehat{A}_{2} = \bbz^2 \ar[dr] &\\
Q_0 = A_0 = \bbz \ar[u] \ar[dr] & Q_1 = A_1 = \bbz \ar[u] \ar[dr] & Q_2 = A_2 = \bbz \ar[u] \ar[dr] & Q_3 = A_3 = \bbz \\
0 \ar[u] & 0 \ar[u] & 0 \ar[u] &0 \ar[u]
}
\caption{A detailed expansion of the quasi-atomic quotient sequence for the ($4$-atomic) po-group presented in Example \ref{ex-homo}}\label{fig:detailedCommutativeDiagram}
\end{figure}

Hence, Corollary \ref{cor-hom} only yields one distinct long exact sequence:
\[\xymatrix{
0 \to  Q_0=\bbz \ar[r] & Q_0=\bbz \ar[r] &  \frac{\widehat{A}_0}{Q_0} = \frac{\bbz^2}{\bbz} \ar[r] &  Q_1 = \bbz \ar[r] &  \frac{Q_1}{A_1} = 0 \ar[dllll] \\
\frac{\widehat{A}_1}{Q_1} = \frac{\bbz^2}{\bbz} \ar[r] &  Q_2=\bbz \ar[r] &  \frac{Q_2}{A_2}=0 \ar[r] & \frac{\widehat{A}_2}{Q_2} = \frac{\bbz^2}{\bbz} \ar[r] & Q_3 = \bbz \ar[r] & 0 
}\]
More concisely, we have
\[0 \to \bbz \overset{=}{\longrightarrow} \bbz \overset{0}{\longrightarrow} \frac{\bbz^2}{\bbz} \overset{\cong}{\longrightarrow} \bbz \to 0 \to \frac{\bbz^2}{\bbz} \overset{\cong}{\longrightarrow} \bbz \to 0 \to \frac{\bbz^2}{\bbz} \overset{\cong}{\longrightarrow} \bbz \to 0 \to 0 \to \dots\]
Of course, this long exact sequence breaks into four fundamental short exact sequences o-isomorphic to the form $0 \to \bbz \to \bbz \to 0$.
$\triangle$\end{ex}

\section{Structure Within Partially Ordered Abelian Groups}\label{structure}

We elaborate upon the structure of groups of divisibility. Perhaps we wish to write every po-group as the group of divisibility of an integral domain. This is not always possible, as every group of divisibility is directed, and it is easy to construct po-groups that are not directed:

\begin{ex}\label{generatedbypositive}
Let $G=\mathbb{Q}$ with partial ordering $a\leq b \Leftrightarrow b-a\in \bbn$. The positive elements of $G$ are precisely $\bbn$. Hence $A(G)=Q(G)=\mathbb{Z}$. Note that no direct sum decomposition is possible, since $\mathbb{Z}$ is not a direct summand of $\mathbb{Q}$. However, this po-group under the given order is not generated by its positive elements and hence is not a group of divisibility for any domain.
$\triangle$\end{ex}

When we are provided the luxury that our po-group is a group of divisibility, we may wish to decompose $G(D)$ into a product (sum, coproduct, etc.) of other po-groups or other groups of divisibility. In particular, we may be inclined to guess that every group of divisibility decomposes into an internal direct sum of the quasi-atomic subgroup with some direct sum complement, i.e.\ a sum of an atomic part with an antimatter part. However, this cannot be expected in general: we provide an example of an integral domain whose group of divisibility may not be written as a sum of an atomic part and an antimatter part. We, again, use the results of \cite{Roi-Poly}, \cite{coy-ap-dom}, and \cite{coy-emb} to construct this example, by using a candidate integral domain, and then constructing a chain of integral domains whose direct limit satisfies our desired properties.

\begin{ex}\label{ex-no-split}
Recall the ring in Example \ref{ex-torsion-homo}  $R_0 = (K[x,Y]_\m)/(x^2+y^2)$, where $Y = \left\{y^{\alpha} \mid \alpha \in \bbq^+\right\}$ and where $x$ is irreducible. We have proven $y$ is not almost-atomic (that proof can be extended to demonstrate that, in fact, $y^{\alpha}$ is not almost-atomic for any $0 < \alpha < 2$). However, $x$ is an atom. Hence, since $x^2 + y^2 = 0$ in $R_0$, each element $y^{\alpha}$ for $0 < \alpha < 2$ is quasi-atomic but not almost-atomic, and, furthermore, there exists a power $k \in \bbn$ such that $(y^{\alpha})^k$ is atomic. In particular, we cannot write the set of non-atomic elements of $G(R_0)$ as a summand of $G(R_0)$.
\end{ex}

The hypothesis that every group of divisibility splits into a direct sum of po-groups, then, is demonstrated to be false by Example \ref{ex-no-split}: the nontrivial intersection between atomic and non-atomic parts of the domain kills all our hopes of writing the domain as a direct sum. Recall a short exact sequence of groups splits when the associated surjection has a right inverse. We expand our inquiry to ask when groups of divisibility (or perhaps more generally partially ordered abelian groups) split.

\begin{lem}
\label{structure_theorem_weak}
Let $H$ be a convex subgroup of a po-group $G$. Consider the natural short exact sequence of po-group o-homomorphisms:
\[\xymatrix{
0 \ar[r] & H \ar[r]^-{\iota} & G \ar[r]^-{\pi} & G/H \ar[r] & 0
}\]
If there exists an o-homomorphism of po-groups acting as a right inverse for $\pi$, say $\pi \circ \pi^{-1} = \text{id}_{G/H}$, then $\frac{G}{H} \oplus H$ is a po-group under the partial order given by $(g_1 + H, h_1) \leq (g_2 + H, h_2)$ if and only if $h_1 + \pi^{-1}(g_1 + H) \leq h_2 + \pi^{-1}(g_2+H)$ in $G$. The product order on $\frac{G}{H} \oplus H$ is finer than this induced splitting order. Furthermore, under this partial order, there exists a po-group o-isomorphism $\phi: G \to \frac{G}{H} \oplus H$ commutative in the diagram
\[\xymatrix{0 \ar[r] & H \ar[r]^-{\iota} \ar[d]^-{=} & G \ar[d]^-{\phi} \ar[r]^-{\pi}&  G/H \ar[r] \ar[d]^-{=} & 0\\
0 \ar[r] & H \ar[u] \ar[r]^-{\iota^{\prime}} & \frac{G}{H} \oplus H \ar[r]^-{\pi^{\prime}} & G/H \ar[u] \ar[r] & 0
}\]
\end{lem}
\begin{proof}
Since $\pi$ has a right inverse as a group homomorphism, we have that $G$ splits as an abelian group. In particular, we have a pair of group isomorphisms, $\phi: G \to \frac{G}{H} \oplus H$ given by $g \mapsto (\pi(g), g - \pi^{-1} \circ \pi(g))$ and $\psi: \frac{G}{H} \oplus H \to G$ given by $(g+H,h) \mapsto h + \pi^{-1}(g+H)$.

We verify that the relation indicated in the lemma is a partial order on $\frac{G}{H}\oplus H$. We have reflexivity since, if $(g+H, h) \in \frac{G}{H}\oplus H$, then $h + \pi^{-1}(g+H) \leq h + \pi^{-1}(g+H)$ in $G$. Transitivity is assured due to transitivity in $G$: if $(g_1 + H, h_1) \leq (g_2 + H, h_2)$ and $(g_2 + H, h_2) \leq (g_3 + H, h_3)$, then we have that $h_1 + \pi^{-1}(g_1 + H) \leq h_2 + \pi^{-1}(g_2 + H)$ and $h_2 + \pi^{-1}(g_2 + H) \leq h_3 + \pi^{-1}(g_3 + H)$. Thus, $h_1 + \pi^{-1}(g_1 + H) \leq h_3 + \pi^{-1}(g_3 + H)$.

We also have antisymmetry. To this end, let $(g_1 + H, h_2) \leq (g_2 + H, h_2)$ in $\frac{G}{H} \oplus H$ and vice versa. Then $h_1 + \pi^{-1}(g_1 + H) \leq_G h_2 + \pi^{-1}(g_2 + H)$ and vice versa. By antisymmetry in $\leq_G$, we have that $h_1 + \pi^{-1}(g_1 + H) = h_2 + \pi^{-1}(g_2 + H)$. In particular, since $\pi^{-1}$ is an o-homomorphism, $h_2 - h_1 = \pi^{-1}(g_1 - g_2 + H) \in H$. Hence, $\pi^{-1}(g_1 - g_2 + H) \in H$. But then $\pi \circ \pi^{-1}\left(g_1 - g_2 + H\right) = H$. Since $\pi^{-1}:\frac{G}{H} \to G$ is the right inverse of $\pi: G \to G/H$, we have that $\pi \circ \pi^{-1} = \text{id}_{G/H}$, so we have that $g_1 - g_2 + H = H$ and therefore $g_1 + H = g_2 + H$. Now $\pi^{-1}(g_1 + H) = \pi^{-1}(g_2 + H)$ so we obtain the following.
\begin{align*}
    h_1 + \pi^{-1}(g_1 + H) =& h_2 + \pi^{-1}(g_2 + H)\\
    h_1 + \pi^{-1}(g_1 + H) =& h_2 + \pi^{-1}(g_1 + H)\\
    h_1 =& h_2
\end{align*}
Hence, $(h_1, g_1 + H) = (h_2, g_2 + H)$, demonstrating antisymmetry.

We verify that $\frac{G}{H} \oplus H$ is a po-group under the order induced by the orders on $G/H$ and $H$ (i.e.\ the group operation is compatible). Let $(g_3 + H, h_3) \in \frac{G}{H} \oplus H$. Then, if $(g_1 + H, h_1) \leq (g_2 + H, h_2)$ then
\[h_1 + \pi^{-1}(g_1 + H) \leq h_2 + \pi^{-1}(g_2 + H)\] 
and so
\[(h_1 + \pi^{-1}(g_1 + H))  + (h_3 + \pi^{-1}( g_3 + H)) \leq h_2 + \pi^{-1}(g_2+ H) + (h_3 + \pi^{-1}( g_3 + H))\]
Rearrange this to write
\[(h_1 + h_3) + \pi^{-1}(g_1 + g_3 + H) \leq (h_2 + h_3) + \pi^{-1}(g_2 + g_3 + H)\]
which, in turn, implies that $(g_1 + g_3 + H, h_1 + h_3) \leq (g_2 + g_3 + H, h_2 + h_3)$ in $\frac{G}{H}\oplus H$, i.e.\ $(g_1 + H, h_1) + (g_3 + H, h_3) \leq (g_2 + H, h_2) + (g_3 + H, h_3)$. Hence, $\frac{G}{H} \oplus H$ is a po-group under the induced order.

The product order is finer than this partial order: if $(g_1+H, h_1) \leq (g_2 + H, h_2)$ in the product order, then $g_1 + H \leq g_2 + H$ and $h_1 \leq h_2$. Hence we have that $h_1 + \pi^{-1}(g_1+H) \leq h_2 + \pi^{-1}(g_2+H)$. 

Lastly, that $\phi$ is an o-isomorphism is also easy to verify: we already have that $\phi$ is an isomorphism so we must simply verify that it is order-preserving and an o-epimorphism. Note that for any $g_1, g_2 \in G$, we have that 
\[(\pi(g_1),g_1 - \pi^{-1}\circ\pi(g_1)) \leq (\pi(g_2),g_2 - \pi^{-1}\circ\pi(g_2))\] 
which is true if and only if 
\[g_1 - \pi^{-1}\circ \pi(g_1) + \pi^{-1}\circ \pi(g_1) \leq g_2 - \pi^{-1}\circ \pi(g_2) + \pi^{-1} \circ \pi(g_2)\] 
This reduces to $g_1 \leq g_2$. Hence, the map $g \mapsto (\pi(g), g - \pi^{-1}\circ \pi(g))$ is order-preserving. It remains to be shown that this map is an o-epimorphism.

Of course, for any positive $x \in (\frac{G}{H} \oplus H)^+$, we have that $x = (g+H, h)$ for some $g \in G$ and $h \in H$ such that $0 \leq h + \pi^{-1}(g+H)$. Moreover, if $x = \phi(g)$ for some $g$, then $x = (g+H, g - \pi^{-1}\circ \pi(g))$. Since $\phi$ is a group isomorphism, the map is surjective so $x=(g+H,h) = (\pi(g_0), g_0 - \pi^{-1}\circ \pi(g_0))$ for some $g_0 \in G$. By the partial order induced on $\frac{G}{H} \oplus H$ by $G$, we have that $0 \leq (\pi(g_0), g_0 - \pi^{-1}\circ \pi(g_0)) $ if and only if $0 \leq g_0 - \pi^{-1}\circ \pi(g_0) + \pi^{-1}\circ \pi(g_0) = g_0 $. Hence, $x$ is the image of the positive element $g_0$, so $\phi(G^+) = (\frac{G}{H} \oplus H)^+$.

This proves that we have an o-isomorphism $G \simeq \frac{G}{H} \oplus H$, where this direct sum is  under the induced partial order $(g_1 + H, h_1) \leq (g_2 + H, h_2)$ if and only if $h_1 + \pi^{-1}(g_1+H) \leq h_2 + \pi^{-1}(g_2+H)$.
\end{proof}

\begin{defn}
We call the partial order induced on $\frac{G}{H} \oplus H$ from Lemma \ref{structure_theorem_weak} the \textit{induced splitting partial order}. 
\end{defn}

Lemma \ref{structure_theorem_weak} can be immediately applied to the quasi-atomic quotient sequence.

\begin{cor}\label{cor:fin-sums}
Let $G_0 \overset{\pi_0}{\rightarrow} G_1 \overset{\pi_1}{\rightarrow} G_2 \overset{\pi_2}{\rightarrow} \cdots$ be a quasi-atomic quotient sequence terminally stable at degree $N$ for some po-group $G$. If for each $0 \leq n < N-1$ there exists an o-homomorphism $\phi_n:G_{n+1} \to G_n$ that is a right-inverse for $\pi_n$, then there exists an o-isomorphism $\psi: \oplus_{n=N-1}^{0} Q(G_n) \to G$.
\end{cor}
\begin{proof}
If $N=0$ then $G_0$ is trivial and we are done. If $N=1$, then the quasi-atomic quotient sequence is $G_0 \to \left\{e\right\}$. In particular $Q(G_0) = G_0$. Let $N=2$. Then we have the quasi-atomic quotient sequence $G_0 \overset{\pi_0}{\onto} G_0/Q(G_0) \overset{\pi_1}{\onto} 0$ and a right inverse for $\pi_0$, which is an o-homomorphism $\pi^{-1}_0: G_0/Q(G_0) \to G_0$. Then the short exact sequence
\[0 \to Q(G_0) \to G_0 \to G_0/Q(G_0) \to 0\]
is split exact so $G_0 \simeq \frac{G_0}{Q(G_0)} \oplus Q(G_0)$. Note, however, that $G_2 = G_1/Q(G_1)$, which (in this instance) is trivial. Hence, $Q(G_1) = G_1 = \frac{G_0}{Q(G_0)}$ so we have that $G_0 \simeq Q(G_1) \oplus Q(G_0)$.  Note the ordering on the direct sum is reversed. Inductively we obtain the result for any finite bounded quasi-atomic quotient sequence.
\end{proof}

Theorem \ref{structure_theorem_weak} verifies that the group isomorphism we obtain from the usual splitting theorem is, indeed, an o-isomorphism. Assuming $G$ splits as described in Theorem \ref{structure_theorem_weak}, it is natural to ask when the induced partial order on $\frac{G}{H} \oplus H$ coincides with the product or the lexicographic order. Mo\v{c}ko\v{r} established the answer to the lexicographic component of this question in \cite{mockor1983groups}, and here we extend that result to establish the product order component of this question:

\begin{thm}
\label{structure_theorem_strong}
Let $H$ be a convex subgroup of a po-group $G$, let $\pi:G \to G/H$ the canonical o-epimorphism, and let $\pi^{-1}$ be an o-homomorphism right inverse of $\pi$ so $\pi \circ \pi^{-1} = \text{id}_{G/H}$. Then $G \simeq \frac{G}{H} \oplus H$ under the induced splitting partial order. Moreover, (i) if $G^+ = \left\{g \in G \mid \pi(g) > H\text{ or }g \in H^+\right\}$ then the induced splitting partial order coincides with the lexicographic partial order and (ii) if $G^+ = \left\{g \in G \mid g - \pi^{-1}\circ \pi(g) \in H^+\right\}$, then the induced splitting partial order on $\frac{G}{H}\oplus H$ coincides with the product partial order.
\end{thm}
\begin{proof}
Let $\phi: G \to \frac{G}{H} \oplus H$ be defined, as usual, by $g \mapsto (\pi(g), g - \pi^{-1}\circ \pi(g))$. It has been established that $\phi$ is a group isomorphism, and $\phi$ is an o-isomorphism when $\frac{G}{H} \oplus H$ has the induced splitting partial order. It is sufficient to show that, if $G$ satisfies the assumed property, then the induced splitting partial order is equivalent to the lexicographic order.

First we show that if $G^+ = \left\{g \in G \mid \pi(g) > H\text{ or }g \in H^+\right\}$ then the lexicographic order is finer and coarser than the induced splitting order. To show the lexicographic order is finer than the splitting order, let $(g_1 + H, h_1) \leq (g_2 + H, h_2)$ in $\frac{G}{H} \oplus H$ under the lexicographic order. Then $g_1 + H < g_2 + H$ or $g_1 + H = g_2 + H$ and $h_1 \leq h_2$. In particular, $g_1 + H < g_2 + H$ or $g_1 + H = g_2 + H$ and $h_1 \leq h_2$. If $g_1 + H < g_2 + H$, we have that $H < g_2 - g_1 + H$. By our assumption on $G$, if $\alpha \in g_2 - g_1 + H$, then $\alpha + H > H$ so $\alpha \in G^+$. Also, $h_2 - h_1 + \pi^{-1}(g_2 - g_1 + H) \in g_2 - g_1 + H$ so $(g_1 + H, h_1) \leq (g_2 + H, h_2)$ under the splitting order.  On the other hand, if $g_1 + H = g_2 + H$ and $h_1 \leq h_2$, then $\pi^{-1}(g_2 - g_1 + H) = \pi^{-1}(H) = 0$, and so $h_2 - h_1 + \pi^{-1}(g_2 - g_1 + H) = h_2 - h_1 \geq 0$.  Hence, $(g_1 + H, h_1) \leq (g_2 + H, h_2)$ under the splitting order. 

To show that the induced splitting order is finer than the lexicographic, say $(g_1 + H, h_1) \leq (g_2 + H, h_2)$ in $\frac{G}{H} \oplus H$ under the splitting order. Then we have $h_1 + \pi^{-1}(g_1 + H) \leq h_2 + \pi^{-1}(g_2 +H)$ in $G$ and since $\pi^{-1}$ is an o-homomorphism, $\pi^{-1}(g_2 + H) - \pi^{-1}(g_1 + H) = \pi^{1}(g_2 - g_1 + H)$. Thus, $\pi^{-1}(g_2 - g_1 + H) + (h_2 - h_1) \geq 0$. Denote $g:= \pi^{-1}(g_2 - g-1 + H) + (h_2 - h_1)$. Note $g \geq 0$. By our assumption on $G$, we have that $g+H > H$ or $g \in H^+$. If $g + H > H$, then applying $\pi$ to both sides of $\pi^{1}(g_2 - g_1 + H) + (h_2 - h_1)$ yields that $g_2 - g_1 + H > H$, i.e.\ $g_2 + H > g_1 + H$. Hence, $(g_1 + H, h_1) \leq (g_2 + H, h_2)$ under the lexicographic order. On the other hand, if $g \in H^+$, then $\pi^{-1}(g_2 - g_1 + H) + (h_2 - h_1) \geq 0$ in $H$. Applying $\pi$ to both sides reveals that $g_2 - g_1 + H = H$, so$g_1 + H = g_2 + H$. Thus, we have that $g = h_2 - h_1 + \pi^{-1}(g_2 - g_1 + H) = h_2 - h_1 \geq 0$, so $h_1 \leq h_2$, and so $(g_1 + H, h_1) \leq (g_2 + H, h_2)$ in the lexicographic order.

For the product order version of the result, let $\phi: G \to \frac{G}{H} \oplus H$ be defined, as usual, by $g \mapsto (\pi(g), g - \pi^{-1}\circ \pi(g))$. It has been established that $\phi$ is a group isomorphism, and $\phi$ is an o-isomorphism when $\frac{G}{H} \oplus H$ has the induced splitting partial order. It is sufficient to show that if $G^+ = \left\{g \in G \mid g - \pi^{-1}\circ \pi(g) \in H^+\right\}$ then the induced splitting partial order is equivalent to the product order.

First we show that if $G^+ = \left\{g \in G \mid g - \pi^{-1}\circ \pi(g) \in H^+\right\}$ then the product order is finer than the induced splitting order. Say that $(g_1 + H, h_1) \leq (g_2 + H, h_2)$ in $\frac{G}{H} \oplus H$ under the product order so that $g_1 + H \leq g_2 + H$ and $h_1 \leq h_2$. We have $g_2 - g_1 + H \geq  H$ so $\pi^{-1}(g_2 - g_1 + H) \geq 0$. Of course, since $h_2 - h_1 \geq 0$ we have  $(h_2 - h_1) + \pi^{-1}(g_2 - g_1 + H) \geq 0$. Thus, $(g_1 + H, h_1) \leq (g_2 + H, h_2)$ in the induced splitting order on $G$.  

All that remains is to show the induced splitting order is finer than the product order. Let $(g_1 + H, h_1) \leq (g_2 + H, h_2)$ in the induced splitting order on $\frac{G}{H} \oplus H$ so that $h_1 + \pi^{-1}(g_1 + H) \leq h_2 + \pi^{-1}(g_2 + H)$. In particular, we have the non-negativity of $x:=h_2 - h_1 + \pi^{-1}(g_2 - g_1 + H) \geq 0$. Since $\pi$ is order preserving, we have $\pi(x) > H$, and by our assumption on $G$, we have $x - \pi^{-1}\circ \pi(x) \geq 0$. Of course, $\pi(x) = \pi(h_2 - h_1 + \pi^{-1}(g_2 - g_1 + H)) = g_2 - g_1 + H$, and $x - \pi^{-1}\circ \pi(x) = h_2 - h_1$, and so $(g_1 + H, h_1) \leq (g_2 + H, h_2)$ under the product order.
\end{proof}

Theorem \ref{structure_theorem_strong} provides the required conditions to determine when the induced splitting order coincides with the product order or the lexicographic order. Iteratively applying Corollary \ref{cor:fin-sums} yields a decomposition Theorem for quasi-atomic quotient sequences:

\begin{thm}
Let $G$ be a group of divisibility with quasi-atomic quotient sequence such that each non-zero differential, $\pi_n : G_n \to G_{n+1} = G_n/Q(G_n)$, has a right inverse o-homomorphism. Then each $G_n \simeq G_{n+1} \oplus Q(G_n)$ under the induced splitting order and we have the following:
\begin{enumerate}[(i)]
\item if $G$ is $n$-atomic, then $G\simeq\oplus_{i=n-1}^{0}Q(G_{i})$ and
\item if $G$ is $n$-antimatter, then $G\simeq G_n\oplus\left(\oplus_{i=n-1}^{0}Q(G_{i})\right)$.
\end{enumerate}
Furthermore, if each $G_n^+ = \left\{g \in G_n \mid \pi_n(g) > Q(G_{n})\text{ or }g \in Q(G_n)^+\right\}$ then the induced splitting partial order coincides with the lexicographic partial order. If each $G_n^+ = \left\{g \in G_n \mid g - \pi_n^{-1}\circ \pi_n(g) \in Q(G_n)^+\right\}$, then the induced splitting partial order coincides with the product partial order.
\label{cor-useful}
\end{thm}

We omit the proof of this theorem (it follows directly from \ref{cor:fin-sums} and \ref{structure_theorem_strong}). We intuitively think Theorem \ref{cor-useful} as stating: if the $n$-atomic and splits at every degree, then every element of $G$ may be uniquely written as a sum of terms, one from each quasi-atomic part of the sequence. This provides a weak version of a universal factorization property. Similarly, $n$-antimatter group have a sort of universal property as well: if the quasi-atomic sequence is $n$ anti-matter every element of $G$ may be uniquely written as a sum of terms, one from each quasi-atomic part of the sequence and one term from the stable target of the sequence, the antimatter po-group $G_n$. Example \ref{ex:ring-examples}  presents integral domains whose groups of divisibility satisfy the stability properties from Definition \ref{defn-atomic}.

\begin{ex}\label{ex:ring-examples}
\begin{enumerate}[(i)]
\item In this example, we present a ring-theoretic realization of Example \ref{ex-n-atomic}(i), which was a po-group with an $n$-atomic quasi-atomic quotient sequence. Let $K$ be a field and consider the 2-dimensional valuation domain
\[R:= \left(K\left[x_1,x_2, \frac{x_2}{x_1^j}\right]\right)_{\m}\]
\noindent where $j$ ranges over all integers $j \geq 1$ and we have localized at the maximal ideal $\m$ generated by the set of all indeterminate elements. The group of divisibility is $G(R) \simeq \bbz^2$ in the lexicographic order. In particular, $R$ has precisely one irreducible, $x_1$; by localizing at $x_1$, we obtain a new ring in which $\frac{x_2}{1}$ is the only irreducible, and localizing again yields the quotient field. Hence, the ring is $2$-atomic.   After the first stage of localizing, the new group of divisibility is $1$-atomic and the atomic subgroup is now o-isomorphic to $\bbz$ since it, too, has only one irreducible.

The quasi-atomic quotient sequence is o-isomorphic to $\bbz^2 \to \bbz \to 0$. $\triangle$

\item In this example, we present a ring-theoretic realization of Example \ref{ex-n-atomic}(ii), a po-group with an $n$-antimatter quasi-atomic quotient sequence. Let $K$ be a field with indeterminates $x, y$, let $Y = \left\{y^{\alpha}, \frac{y^{\alpha}}{x^j} \mid \alpha \in \bbq^+right, j \in \bbn\right\}$, and consider $R = (K[x,Y])_{\m}$ where $\m$ is the maximal ideal generated by all monomials. Every element of $R$ is associated to some monomial, and $x$ is uniquely irreducible (up to associates) among all monomials. The group of divisibility for this ring is $\bbz \oplus \bbq$ ordered lexicographically. We have the quasi-atomic quotient sequence $\bbz \oplus \bbq \onto \bbq \onto \bbq \onto \cdots$.  This example naturally extends to an $2$-antimatter group of divisibility by considering the ring $\left(K\left[y^{\alpha}, x, z, \frac{y^{\alpha}}{x^j}, \frac{x}{z^j}\right]\right)_{\m}$; clearly, this example will extend inductively to any $n$-antimatter group as necessary $\triangle$

\item We present an example of a domain whose group of divisibility has a quasi-atomic quotient sequence that is not stable but is o-isomorphic in each degree. Let $K$ be a field and consider the infinite dimensional valuation domain
\[R:= \left(K\left[x_1,x_2,x_3,\dots,\frac{x_2}{x_1^j},\frac{x_3}{x_2^j},\dots,\frac{x_m}{x_{m-1}^{j}},\dots\right]\right)_{\m}\]
where $j$ ranges over all integers $j \geq 1$ and we have localized at the maximal ideal $\m$ generated by the set of all indeterminate elements. The group of divisibility is $G(R) \simeq \oplus_{i\in \bbn} \bbz$. In particular, $R$ has precisely one irreducible, $x_1$, which is prime; by localizing at $x_1$, we obtain a new ring in which $\frac{x_2}{1}$ is the only irreducible, which is prime, and so on. The domain is mixed antimatter and atomic. We have that $Q(G) = A(G) \simeq \bbz \oplus (\oplus_{i > 1} 0) \simeq \bbz$. Of course, we have that $\frac{\oplus_{i \in \bbn} \bbz}{\bbz \oplus 0 \oplus 0 \oplus \cdots} \simeq \oplus_{i \in \bbn} \bbz$, so each o-epimorphism in the quasi-atomic quotient sequence is an o-isomorphism. We have the quasi-atomic quotient sequence
\[ \oplus_{i \geq 1} \bbz \to \left[(\oplus_{i=1}^{1} 0) \oplus (\oplus_{i \geq 2} \bbz)\right] \to \left[(\oplus_{i=1}^{2} 0) \oplus (\oplus_{i \geq 3} \bbz)\right] \to \cdots\]
which is o-isomorphic to the sequence
\[ \oplus_{i \in \bbn} \bbz \to \oplus_{i \in \bbn} \bbz \to \oplus_{i \in \bbn} \bbz \to \dots \]
Note even though all degrees are o-isomorphic, the o-epimorphisms in the quasi-atomic quotient sequence are not o-isomorphisms. $\triangle$
\end{enumerate}
\end{ex}

\begin{ex}\label{ex:branching-primes}
We construct a domain with a non-stabilizing, non-terminating quasi-atomic quotient sequence that has po-groups in each degree that are not o-isomorphic. The idea is to construct an integral domain whose graph of prime ideals is a tree with one root. After the first stage of localization, we wish to obtain an integral domain for which this tree now has two roots, corresponding to two co-maximal ideals. We continue this process such that after the $n^{th}$ stage of localization, we obtain a domain whose graph of prime ideals is a tree with $n+1$ roots as co-maximal ideals.  To this end, let $K$ be a field with indeterminate $\left\{x_{i,j} \mid 1 \leq i, 1 \leq j \leq i\right\}$. Let
\[T:=K\left[x_{1,1}, x_{2,1},x_{2,2}, \frac{x_{2,i}}{x_{1,j}^{\ell}}, x_{3,1}, x_{3,2}, x_{3,3}, \frac{x_{3,i}}{x_{2,j}^{\ell}}, x_{4,1},x_{4,2},x_{4,3},x_{4,4}, \frac{x_{4,1}}{x_{3,j}^{\ell}}, \ldots \right]\]
where the index $i$ in the terms $x_{k,i}$ varies from $1 \leq i \leq k$, the index $j$ in the terms $x_{k,j}$ range from $1 \leq j \leq k$, and where each $\ell$ ranges over $\ell \geq 1$. That is, we have the divisibility conditions: $x_{1,1}^{\ell} \mid x_{2,i}$ for every $1 \leq i \leq 2$ and every $1 \leq \ell$, we have $x_{2,1}^{\ell} \mid x_{3,i}$ for every $1 \leq i \leq 3$ and every $1 \leq \ell$, and so on. These divisibility conditions ensure the graph of prime ideals is the branching tree of co-maximal ideals as desired. Define the prime ideals
\begin{align*}
\p_{1,1} &= \left(x_{1,1}\right) & \p_{3,1} &= \left(x_{3,1}, \frac{x_{3,1}}{x_{2,1}^{\ell}}, \frac{x_{3,1}}{x_{2,2}^{\ell}}\right)\\
\p_{2,1} &= \left(x_{2,1}, \frac{x_{2,1}}{x_{1,1}^{\ell}}\right) & \p_{3,2} &= \left(x_{3,2}, \frac{x_{3,2}}{x_{2,1}^{\ell}}, \frac{x_{3,2}}{x_{2,2}^{\ell}}\right)\\
\p_{2,2} &= \left(x_{2,2}, \frac{x_{2,2}}{x_{1,1}^{\ell}}\right) & \p_{3,3} &= \left(x_{3,3}, \frac{x_{3,3}}{x_{2,1}^{\ell}}, \frac{x_{3,3}}{x_{2,2}^{\ell}}\right)
\end{align*}
and so on for $\p_{i,j}$. Further define the set $S:=\left(\displaystyle \cup_{1 \leq j \leq i}^{\infty} \p_{i,j} \right)^{c}$.  The graph of prime ideals for $T$ this ring is depicted in Figure \ref{fig:prime-ideal-tree}. The prime ideal $\p_{1,1}$ is principal. Now consider the ring $R=T_{S}$. In this ring, $x_{1,1}$ is prime and uniquely irreducible. The graph of prime ideals for $R$ this ring is depicted in Figure \ref{fig:prime-ideal-tree-after}. The co-maximal prime ideals $(\p_{2,1})_S$ and $(\p_{2,2})_S$ are principal. In fact, at every stage of localization, the collection of co-maximal prime ideals are all principal. 

\begin{figure}
\centerline{
\xymatrix{
 & &  \p_{1,1}   & &\\
 &  \p_{2,1} \ar[ur] & & \p_{2,2} \ar[ul]  &\\
 \p_{3,1}  \ar[ur] \ar[urrr]  & & \p_{3,2} \ar[ul] \ar[ur] & & \p_{3,3} \ar[ul] \ar[ulll]\\
\vdots \ar[u] & & \vdots \ar[u]& & \vdots \ar[u] \\
}
}
\caption{The graph of prime ideals of the ring $T$ from Example \ref{ex:branching-primes}. Here arrows denote set inclusion.}
\label{fig:prime-ideal-tree}
\end{figure}
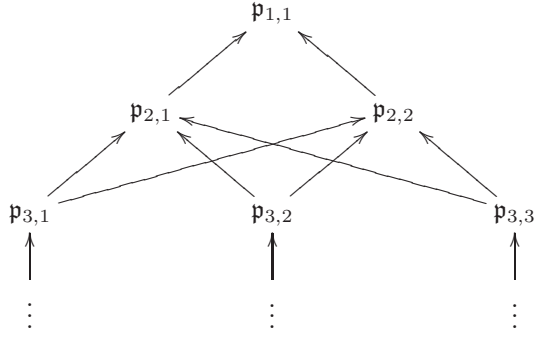

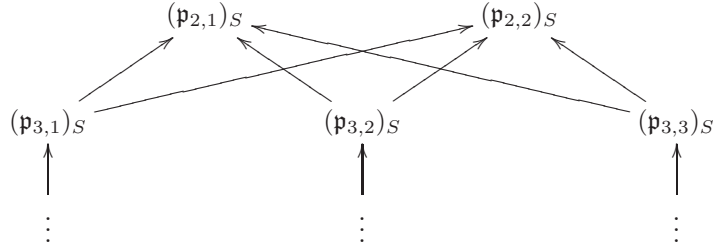
\begin{figure}
\centerline{
\xymatrix{
&  (\p_{2,1})_S  & & (\p_{2,2})_S  &\\
(\p_{3,1})_S  \ar[ur] \ar[urrr]  & & (\p_{3,2})_S \ar[ul] \ar[ur] & & (\p_{3,3})_S \ar[ul] \ar[ulll]\\
\vdots \ar[u] & & \vdots \ar[u]& & \vdots \ar[u] \\
}
}
\caption{The graph of prime ideals of the ring $T_S$ from Example \ref{ex:branching-primes}. Here arrows denote set inclusion.}
\label{fig:prime-ideal-tree-after}
\end{figure}

After the first stage of localization in the process of generating the quasi-atomic quotient sequence, we lose the prime ideal $\p_{1,1}$. Indeed, the generator of $\p_{1,1}$ becomes a unit.  The prime ideals $\p_{2,1}$ become comaximal and principal and the new ring after localization has precisely two irreducible elements, $x_{2,1}$ and $x_{2,2}$. At each stage of localization the (principal) generators of the current top layer become units, revealing the next layer. After the second stage of localization, the ring has three irreducible elements, $x_{3,1}, x_{3,2}$, and $x_{3,3}$, and the graph of prime ideals is a tree with three roots. The process continues iteratively. At the $n^{th}$ stage, we have $n+1$ distinct irreducible elements, corresponding to a quasi-atomic subgroup o-isomorphic to $\bbz^{n+1}$ in the product order. 

For each $n \geq 1$, define $H_n = \oplus_{i=1}^{n} \bbz$ in the product order. Then the group of divisibility for this ring is o-isomorphic to $G = \oplus_{i=1}^{\infty} H_i$ ordered lexicographically.   Then $Q(G) = H_1 \oplus 0 \oplus 0 \oplus \cdots \oplus 0$, so $G_1 = G_0/Q(G_0) \simeq \oplus_{i=2}^{\infty} H_i$. Inductively, we see that
\[\oplus_{i\geq 1} H_i \to \oplus_{i \geq 2} H_i \to \oplus_{i \geq 3} H_ \to \cdots \]
is the quasi-atomic quotient sequence. $\triangle$
\label{notStableNotTerminating}
\end{ex}

Finally, we complete this section by formalizing the notion of the ``antimatter part'' of a ring to which we have been alluding throughout this entire section.

\begin{defn}
Let $G$ be a po-group with $Q(G)$ be the quasi-atomic subgroup of $G$. We define a \textit{quasi-atomic complement} of $G$ to be any subgroup $H \subseteq G$ such that $H\cap Q(G)=0$ that also satisfies the property that, for all $g\in G$, there is an $n\in\bbn$ such that $ng\in H\oplus Q(G)$. We define a \textit{maximal quasi-atomic complement} of $G$ to be any quasi-atomic complement maximal among the set of all quasi-atomic complements of $G$ with respect to inclusion of subgroups.
\end{defn}

\begin{lem}
Let $G$ be a po-group with quasi-atomic subgroup $Q(G)$. Then a maximal quasi-atomic complement of $G$ exists.\label{lem-complement}
\end{lem}
\begin{proof}Let $\mathfrak{J}$ denote the set of all subgroups of $G$ with a trivial intersection with the quasi-atomic subgroup of $G$; that is to say, set $\mathfrak{J} = \left\{J \subseteq G \mid J \cap Q(G) = \langle 0\rangle \right\}$.  Since $\langle 0\rangle  \in \mathfrak{J}$, we have that $\mathfrak{J}$ is nonempty.  Any chain in $\mathfrak{J}$, say $\left\{J_\lambda\right\}_{\lambda \in \Lambda}$, certainly has an upper bound, namely $\cup_\lambda J_\lambda$.  Applying Zorn's Lemma yields a maximal element of $\mathfrak{J}$, which we denote $H^{\prime}$. We claim that $H^{\prime}$ is a quasi-atomic complement, i.e.\ for all $g\in G$, there is a strictly positive $n\in\bbn$ such that $ng\in H^{\prime}\oplus Q(G)$.

Notice that if $g\in H^\prime \subseteq  H^{\prime}\oplus Q(G)$, then our claim is established. If not, then the maximality of $H^{\prime}$ implies $\langle g, H^{\prime}\rangle$ has nontrivial intersection with $Q(G)$. In particular, for some $n>0$, $h\in H^{\prime}$, and $\alpha \in Q(G)$, we have that $ng+h=\alpha$. Clearly then $ng\in H^{\prime}\oplus Q(G)$. This establishes that $H^{\prime}$ is a quasi-atomic complement.
\end{proof}

We remark that a maximal quasi-atomic complement to $Q(G)$, which we have called $H$, is not unique, even with respect to order considerations. If we consider $\bbz^2$ under the lexicographic ordering, $Q(G)$ is uniquely determined (it is the subgroup $\mathbb{Z}\oplus 0$) but we have many choices for $H^{\prime}$. The subgroups generated by $(0,1)$ and $(1,1)$ are two distinct choices, for example. 

Notice that partially ordered abelian groups with nontrivial elements are necessarily torsion free. Indeed, for an element $x$ of finite order, say $n$, then we have the ordering $x \leq x^2 \leq x^3 \leq \cdots \leq x^{n-1} \leq x^n = e \leq x$. Antisymmetry in the partial order, $\leq$, insists that $x = e$. This leads us to the following theorem:

\begin{lem}\label{divisible_splitting}
Let $G$ be a directed po-group.  If there exists some quasi-atomic complement of $G$ that is divisible, say $H^{\prime} \subseteq G$, then $G = H^{\prime} \oplus Q(G)$.
\end{lem}
\begin{proof}Note that if $G = H^{\prime} \oplus Q(G)$, then the short exact sequence 
\[0 \to Q(G) \to G \to G/Q(G) \to 0\] necessarily splits, so $H^{\prime} = \frac{G}{Q(G)}$, and the direct sum $H^{\prime} \oplus \frac{G}{Q(G)}$ inherits the induced splitting partial order as previously described. Thus, we are primarily concerned with establishing this set equality.

We show the generators of $G$ are elements of $H^{\prime} \oplus Q(G)$. Since $G$ is directed, we only need concern ourselves with the positive elements. To this end, choose $g \in G$ to be positive. Since $H^{\prime}$ is a quasi-atomic complement, there exists some $n > 0$ such that $ng \in H^{\prime} \oplus Q(G)$. We write this as $ng = h + s$, for some $h \in H^{\prime}$, $s \in Q(G)$.

Since $H$ is divisible, we can write $h = nh_{0}$ for some $h_{0} \in H^{\prime}$.  Thus, $ng = nh_{0} + s$, and in particular, $n\left(g-h_{0}\right) + Q(G) = Q(G)$ as an element of $G/Q(G)$. Of course, $G/Q(G)$ is a po-group, and is hence torsion free, so we conclude $g - h_{0} \in Q(G)$, or rather $g = h_{0} + s_{0}$ for some $s_{0} \in Q(G)$.  Hence, $g \in H^{\prime} \oplus Q(G)$.
 \end{proof}

If any hypothesis of the theorem is violated, the corollary may not be true. For example, if $G$ is not generated by its positive elements, as in Example \ref{generatedbypositive}, the corollary fails.  Example \ref{last-ex} shows this theorem is immediately applicable to domains with pathological factorization behaviors since the quasi-atomic complement is divisible when certain domain elements admit $n^{th}$ roots for any $n \in \bbn$.

\begin{ex}\label{last-ex}
Let $K$ be a field and $x$ be any indeterminate. Consider the set of indeterminates $X = \left\{x^\alpha \mid \alpha \in \bbq^+\right\}$ be the collection of positive rational monomials, and consider the ring $R = K[X]_\mathfrak{m}$ where $\mathfrak{m}$ is the maximal monomial ideal. This ring has no irreducible elements, and so has no quasi-atomic elements. Thus we can trivially write $G = H^{\prime} \oplus 0$. Furthermore, $H^{\prime}$ is divisible. To demonstrate that $H^{\prime}$ is divisible, notice that multiplication in the ring is equivalent to the group operation on $G$. We may write any $g \in G$ as $g = x^\alpha U(R)$ and $ng = x^{n\alpha} U(R)$. Hence, for any $g \in G$, we have $g = x^{\alpha}U(R) = (x^{\alpha/n})^n U(R) = ng_0$ where $g_0 = x^{\alpha/n}U(R)$.
$\triangle$\end{ex}

\bibliography{biblio}{}

\begin{thebibliography}{10}

\bibitem{AAZ}
D.D. Anderson, D.F. Anderson, and M.~Zafrullah.
\newblock Factorization in integral domains.
\newblock {\em J. Pure Appl. Algebra}, 69:1--19, 1990.

\bibitem{anderson1990weakly}
DD~Anderson and Muhammad Zafrullah.
\newblock Weakly factorial domains and groups of divisibility.
\newblock {\em Proceedings of the American Mathematical Society},
  109(4):907--913, 1990.

\bibitem{boynton2013graph}
Jason~Greene Boynton and Jim Coykendall.
\newblock On the graph of divisibility of an integral domain.
\newblock {\em arXiv preprint arXiv:1304.0063}, 2013.

\bibitem{coykendall1999integral}
J.~Coykendall, D.E. Dobbs, and B.~Mullins.
\newblock On integral domains with no atoms.
\newblock {\em Communications in Algebra}, 27(12):5813--5831, 1999.

\bibitem{coy-emb}
J.~Coykendall and B.~Mammenga.
\newblock An embedding theorem.
\newblock {\em Journal of Algebra}, 325:177--185, 2011.

\bibitem{coy-ap-dom}
J.~Coykendall and M.~Zafrullah.
\newblock {AP}-domains and unique factorization.
\newblock {\em Journal of Pure and Applied Algebra}, 189:27--35, 2004.

\bibitem{fuchs2011partially}
L.~Fuchs.
\newblock {\em Partially ordered algebraic systems}.
\newblock Dover Publications, 2011.

\bibitem{gilmer1972multiplicative}
Robert~W Gilmer.
\newblock {\em Multiplicative ideal theory}, volume~12.
\newblock M. Dekker, 1972.

\bibitem{jensen1963characterizations}
CU~Jensen.
\newblock \protect{On characterizations of Pr\"{u}fer rings}.
\newblock {\em Math. Scand}, 13:90--98, 1963.

\bibitem{kaplansky1970commutative}
Irving Kaplansky.
\newblock {\em Commutative rings}.
\newblock Allyn and Bacon, 1970.

\bibitem{krull1932allgemeine}
W.~Krull.
\newblock Allgemeine bewertungstheorie.
\newblock {\em J. Reine Angew. Math}, 167(160-196):294, 1932.

\bibitem{lebowitz2016classifying}
Noah Lebowitz-Lockard.
\newblock Classifying subatomic domains.
\newblock {\em arXiv preprint arXiv:1610.05874}, 2016.

\bibitem{mott1974convex}
J.L. Mott.
\newblock Convex directed subgroups of a group of divisibility.
\newblock {\em Canad. J. Math}, 26(3):532--542, 1974.

\bibitem{mott1976exact}
Joe~L Mott and Michel Schexnayder.
\newblock Exact sequences of semi-value groups.
\newblock {\em Journal f{\"u}r Mathematik. Band}, 283(284):50, 1976.

\bibitem{mockor1983groups}
J.~Mo\v{c}ko\v{r}.
\newblock {\em Groups of divisibility}, volume~3.
\newblock Springer, 1983.

\bibitem{Roi-Poly}
M.~Roitman.
\newblock Polynomial extensions of atomic domains.
\newblock {\em Journal of Pure and Applied Algebra}, 87:187--199, 1993.

\bibitem{samuel1948ultrafilters}
Pierre Samuel.
\newblock Ultrafilters and compactification of uniform spaces.
\newblock {\em Transactions of the American Mathematical Society},
  64(1):100--132, 1948.

\bibitem{sheldon1973two}
P.B. Sheldon.
\newblock Two counterexamples involving complete integral closure in
  finite-dimensional pr\"{u}fer domains.
\newblock {\em Journal of Algebra}, 27(3):462--474, 1973.

\bibitem{zaks1976half}
Abraham Zaks.
\newblock Half factorial domains1.
\newblock {\em AMERICAN MATHEMATICAL SOCIETY}, 82(5), 1976.

\end{thebibliography}
\bibliographystyle{plain}

\end{document}